\newtheorem{theorem}{Theorem}[section]
\newtheorem{lemma}[theorem]{Lemma}
\newtheorem{proposition}[theorem]{Proposition}
\newtheorem{corollary}[theorem]{Corollary}
\theoremstyle{definition}
\newtheorem{example}[theorem]{Example}
\newtheorem{question}[theorem]{Question}
\newtheorem{remark}[theorem]{Remark}
\newtheorem{fact}[theorem]{Fact}
\newtheorem{claim}[theorem]{Claim}
\numberwithin{figure}{section}
\numberwithin{table}{section}
\newcommand{\ZZ}{\mathbb{Z}}
\newcommand{\Aut}{\operatorname{Aut}}
\newcommand{\Ker}{\operatorname{Ker}}
\newcommand{\id}{\operatorname{id}}
\begin{document}

\title[Marked fatgraph complexes and surface automorphisms]
{Marked fatgraph complexes and surface automorphisms}
\
\author{Yusuke Kuno}
\address{Department of Mathematics\\
Graduate School of Science\\
Hiroshima University\\
1-3-1 Higashi-Hiroshima, Hiroshima 739-8526\\
JAPAN}
\email{kunotti{\char'100}hiroshima-u.ac.jp}
\
\author{R.\ C.\ Penner}
\address{Center for the Quantum Geometry of Moduli Spaces\\
Aarhus University\\
DK 8000 C Aarhus\\
Denmark~~{\rm and}~~Departments of Mathematics and Theoretical Physics\\
Caltech, Pasadena CA 91125 USA}
\email{rpenner{\char'100}imf.au.dk~~{\rm and}~~rpenner{\char'100}caltech.edu}
\
\author{Vladimir Turaev}
\address{Department of Mathematics\\
Indiana University\\
Bloomington IN 47405\\
USA}
\email{vtouraev{\char'100}indiana.edu}

\thanks{This work was done at the Center for Quantum Geometry of Moduli Spaces
funded by the Danish National Research Foundation.}

\keywords{Mapping class group, Torelli group, fatgraph, Johnson
homomorphism}

\begin{abstract}
Combinatorial aspects of the Torelli-Johnson-Morita theory of surface automorphisms
are extended to certain subgroups of the mapping class groups.
These subgroups are defined relative to a specified homomorphism
from the fundamental group of the surface onto an arbitrary group
$K$.  For $K$ abelian, there is a
combinatorial theory akin to the classical case, for example,
providing an explicit cocycle representing the first Johnson
homomophism with target $\Lambda ^3 K$.
 Furthermore, the Earle class with coefficients in $K$  is represented by an explicit
 cocyle.
\end{abstract}

\maketitle

\section*{Introduction}

Consider a   compact oriented surface  $\Sigma$ with non-empty
boundary and  basepoint $\ast\in \partial \Sigma$. In this note, we
study certain subgroups of the mapping class group $MC(\Sigma )$.
These subgroups are determined by an arbitrary group $K$ and a fixed
epimorphism $p $ from the fundamental group $\pi=\pi_1(\Sigma,
\ast)$ onto $K$. There are two flavors to these subgroups defined by
commutative diagrams
$$\begin{array}{cccccc}
&\pi&&\xrightarrow{~~~\varphi~~~}&&\pi\\\\
&&p~\searrow&&\swarrow~p\\\\
&&&K\\
 \end{array} \qquad {\rm and}\quad \begin{array}{cccccc}
&& \pi&\xrightarrow{~~~~\varphi~~~~}&\pi\\\\
&&p~\downarrow&&\downarrow~p\\\\
&&K&\xrightarrow{~~~\tilde \varphi~~~}&K\\
\end{array}$$
where the automorphism $\varphi $ of $\pi$ is induced by an
orientation-preserving self-diffeomorphism of $\Sigma$ fixing $\ast$
and $\tilde \varphi \in \Aut(K)$. This leads to two respective
subgroups $MC_\nabla(\Sigma;p)$ and $MC_\Box(\Sigma;p)$ of
$MC(\Sigma )$ depending upon the choice of
 $p$. We explain that these groups generalize, among others,
the Torelli groups and the handlebody  groups.

We show that most of the combinatorial structure
\cite{ABP,BKP,GM,MP} in terms of fatgraphs and flips coming from the
cell decomposition \cite{Penbook} of the moduli space of $\Sigma$
persists in this more general setting.

We then focus on the case of abelian $K$. In this case, in
generalization of \cite{MP}, an analogue of the classical Johnson
homomorphism \cite{Morita99} is expressed by an explicit cocycle in
terms of fatgraphs whose edges are  labeled by elements of $K$.
Likewise, we define a combinatorial cocycle of the Ptolemy groupoid
with coefficient in $K$. In the case $K=H$, the first homology of
the surface, we further show that it descends to $-2$ times the
Earle class \cite{Ea} \cite{Morita89}, which is a generator of the
cohomology group $H^1(MC(\Sigma);H)$.

We emphasize the abelian case  here in order to extend the
combinatorial theory already developed for the mapping class groups
and Torelli groups.  The   non-abelian case remains largely open and
should be interesting to explore. Indeed, there are many open
questions in this area, some of which are analogues of the classical
problems while others are specific  to the new technology. Several
questions and problems are raised throughout the paper. For
concreteness, we restrict ourselves to surfaces with one boundary
component, but the general case may be addressed similarly.

\section{Notation and Background}

\subsection{The Mapping Class Group} We introduce notation which
will be used throughout the paper. We fix a  compact oriented smooth
surface $\Sigma=\Sigma_{g,1}$
 of genus $g\geq 1$ with one boundary component   and fix a basepoint $*\in\partial \Sigma$.  Let
$\pi=\pi_1(\Sigma,*)$ be the fundamental group of $\Sigma$ and
$\zeta\in\pi$ be the homotopy class of the loop $\partial \Sigma$
with orientation {\it opposite} to the one induced by that of $
\Sigma$.   Let $H=H_1(\Sigma;{\mathbb Z}) \cong \ZZ^{2g}$ be the
first integral homology group of $\Sigma$.    The
 homology intersection pairing in $H $
is  skew-symmetric and non-degenerate. It is denoted $\cdot $.

Let $MC(\Sigma)$ denote the {\it mapping class group} of isotopy
classes of   diffeomorphisms of $\Sigma$ that fix $\partial \Sigma$
pointwise. We recall the following   standard facts about this
group.

\vskip .2cm

\begin{fact}\label{fact1}
Any $\varphi\in MC(\Sigma)$ induces an automorphism $\varphi_\ast
\in \Aut(\pi)$ of $\pi$ in the obvious way. The Dehn-Nielsen theorem
\cite{Zieschang} asserts that the map $\varphi\mapsto \varphi_\ast$
is an embedding of  $MC(\Sigma)$ onto the subgroup of $\Aut(\pi)$
fixing $\zeta$.  We shall   identify $\varphi\in MC(\Sigma)$ with
$\varphi_\ast\in \Aut(\pi)$.
\end{fact}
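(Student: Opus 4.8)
The plan is to read this as the Dehn--Nielsen--Baer theorem for the bordered surface $\Sigma=\Sigma_{g,1}$ with basepoint on the boundary, and to recall the classical argument behind the citation \cite{Zieschang}, splitting the assertion into three parts: $\Phi\colon MC(\Sigma)\to\Aut(\pi)$, $\varphi\mapsto\varphi_\ast$, is a well-defined homomorphism whose image lies in the stabilizer of $\zeta$; it is injective; and it surjects onto that stabilizer. The first part is routine: a diffeomorphism fixing $\partial\Sigma$ pointwise fixes $\ast$, so it induces $\varphi_\ast\in\Aut(\pi)$; an isotopy rel $\partial\Sigma$ is in particular a based homotopy, so $\varphi_\ast$ depends only on the class of $\varphi$ in $MC(\Sigma)$; and $(\varphi\psi)_\ast=\varphi_\ast\psi_\ast$. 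Since $\varphi$ is literally the identity on $\partial\Sigma$ it fixes the loop $\partial\Sigma$, hence $\varphi_\ast(\zeta)=\zeta$, so $\Phi$ lands in $A:=\{f\in\Aut(\pi): f(\zeta)=\zeta\}$. (A diffeomorphism that is the identity near $\partial\Sigma$ is orientation-preserving, so no orientation condition need be imposed separately on the $MC(\Sigma)$ side; correspondingly, demanding that $\zeta$ be fixed \emph{on the nose}, rather than merely up to conjugacy and inversion, is exactly what pins down the orientation-preserving, basepoint-preserving part of $\Aut(\pi)$.)

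For injectivity I would use that $\Sigma$ deformation retracts onto a wedge of $2g$ circles and is therefore a $K(\pi,1)$. If $\varphi_\ast=\id_\pi$, then obstruction theory for maps of pairs into an aspherical target (the relevant obstruction groups $H^{k+1}(\Sigma,\partial\Sigma;\pi_k\Sigma)$ vanish for $k\ge 2$) shows that $\varphi$ is homotopic rel $\partial\Sigma$ to $\id_\Sigma$. Then I would invoke the classical surface fact that diffeomorphisms of a compact surface which agree on the boundary and are homotopic rel boundary are isotopic rel boundary (Baer, Epstein; this is part of what \cite{Zieschang} provides), concluding that $\varphi$ is trivial in $MC(\Sigma)$.

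For surjectivity, take $\Psi\in A$. Using $K(\pi,1)$-ness, realize $\Psi$ by a based map $f\colon(\Sigma,\ast)\to(\Sigma,\ast)$ with $f_\ast=\Psi$; since $\Psi$ is an automorphism and $\Sigma$ is aspherical, $f$ is a homotopy equivalence. Now normalize $f$ on the boundary: $f|_{\partial\Sigma}$ is a based loop in $\Sigma$ representing $\Psi(\zeta)=\zeta$, the boundary circle is $\pi_1$-injective, and $\zeta$ generates the image of $\pi_1(\partial\Sigma)$, so $f|_{\partial\Sigma}$ is based-homotopic in $\Sigma$ to the standard parametrization of $\partial\Sigma$; feeding this homotopy through a collar of $\partial\Sigma$ via the homotopy extension property replaces $f$ by a homotopic map equal to the identity on $\partial\Sigma$. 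Finally apply the bordered surface case of Dehn--Nielsen--Baer (see \cite{Zieschang}): a homotopy equivalence of $\Sigma$ restricting to the identity on $\partial\Sigma$ is homotopic rel $\partial\Sigma$ to a diffeomorphism $\varphi$ fixing $\partial\Sigma$ pointwise, and then $\varphi_\ast=f_\ast=\Psi$.

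The main obstacle is not the group-theoretic bookkeeping but the two topological inputs -- promoting a rel-boundary homotopy of diffeomorphisms to a rel-boundary isotopy, and promoting a homotopy equivalence to a diffeomorphism -- which are precisely the hard classical theorems about surfaces; I would cite them rather than reprove them. The one point requiring genuine care specific to this setup is the boundary normalization in the surjectivity step: one must be able to homotope $f$ to the \emph{identity} on $\partial\Sigma$, not merely to some power of the boundary curve or to an orientation-reversing parametrization, and this is exactly where the hypothesis $\Psi(\zeta)=\zeta$ (as opposed to $\Psi(\zeta)$ being conjugate to $\zeta^{\pm1}$) is consumed, together with the fact that $\zeta$ is not a proper power in $\pi$.
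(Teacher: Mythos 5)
The paper offers no proof of this statement at all: it is packaged as a ``Fact'' and delegated wholesale to the citation \cite{Zieschang}, so there is nothing internal to compare your argument against. Your outline is a correct sketch of the classical Dehn--Nielsen--Baer argument for $\Sigma_{g,1}$ with boundary fixed pointwise, and you correctly isolate the two genuinely hard inputs (rel-boundary homotopy of diffeomorphisms implies rel-boundary isotopy, and realization of a boundary-fixing homotopy equivalence by a diffeomorphism) as theorems to be cited rather than reproved --- which is consistent with the paper's own level of detail. Your identification of where the hypothesis $\varphi_\ast(\zeta)=\zeta$ \emph{on the nose} is used, namely to normalize $f|_{\partial\Sigma}$ to the identity rather than to a conjugate or inverse parametrization, is the right point to emphasize, since it is exactly what distinguishes this bordered statement from the closed-surface version $MC^{\pm}\cong\operatorname{Out}(\pi)$. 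Two small quibbles: in the injectivity step the only obstruction needing comment beyond $\pi_k\Sigma=0$ for $k\ge 2$ is the $1$-dimensional one, which is handled precisely by $\varphi_\ast=\id$ together with the connectedness of $\partial\Sigma$ containing $\ast$ (worth saying explicitly, since $\pi_1$ is non-abelian and this step is not literally a cohomology computation); and the closing appeal to ``$\zeta$ is not a proper power'' is not actually consumed where you place it --- once $\Psi(\zeta)=\zeta$ exactly, two based loops representing the same class are based-homotopic and no primitivity is needed for the normalization, though that fact does enter the proof of the cited realization theorem itself. Neither point is a gap in what is, after all, a citation-level statement.
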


\vskip .2cm

\begin{fact}\label{fact2} Let $a:\pi\to H$ be the abelianization
map.
Any homomorphism $p $ from $\pi$ to an abelian group $K$ induces a
unique homomorphism $p_\#:H \to K$ such that  $p=p_\# \circ a :\pi
\to K$.
 In particular,
any $\varphi\in \Aut(\pi)$  induces an isomorphism $\varphi_\#:H\to
H$ such that the following diagram commutes:
$$\begin{array}{cccccc}
&& \pi&\xrightarrow{\varphi}&\pi\\\\
&&a~\downarrow&&\downarrow~a\\\\
&&H&\xrightarrow{\varphi_\#}&H \, .\\
\end{array}$$
If $\varphi\in MC(\Sigma)$ (i.e., if $\varphi(\zeta)=\zeta$), then
$\varphi_\#$ preserves the intersection pairing.  The resulting
homomorphism  from $MC(\Sigma) $ to the symplectic group $Sp({H})$
is an epimorphism, cf.\ \cite{MKS}.
\end{fact}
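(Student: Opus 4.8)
The plan is to prove the four assertions in turn; the first two are formal and the last two carry the content. For the existence and uniqueness of $p_\#$, I invoke the universal property of abelianization: since $K$ is abelian, $p$ annihilates the commutator subgroup $[\pi,\pi]=\Ker(a)$, hence factors through $H=\pi/[\pi,\pi]$, and the factoring map $p_\#$ is unique because $a$ is onto. Applying this with $K=H$ and with $a\circ\varphi\colon\pi\to H$ in place of $p$ yields the unique $\varphi_\#\colon H\to H$ with $a\circ\varphi=\varphi_\#\circ a$. Uniqueness also forces $(\psi\circ\varphi)_\#=\psi_\#\circ\varphi_\#$ and $(\id)_\#=\id$, so $(\varphi^{-1})_\#$ is a two-sided inverse of $\varphi_\#$; thus $\varphi_\#$ is an isomorphism and $\varphi\mapsto\varphi_\#$ is a homomorphism.

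Next, that $\varphi_\#$ preserves the intersection pairing when $\varphi\in MC(\Sigma)$. The quickest argument uses Fact~\ref{fact1}: such a $\varphi$ is realized by a self-diffeomorphism of $\Sigma$ fixing $\partial\Sigma$ pointwise, which is necessarily orientation-preserving (it fixes the boundary framing), and the pairing $\cdot$ is a topological invariant of the oriented surface --- it may be computed from transverse intersection numbers of $1$-cycles, or as the cup-product pairing on $H^1$ of the closed surface obtained by capping off $\partial\Sigma$ (capping does not change $H_1$, since $\zeta$ is a commutator and hence already null-homologous). A self-contained algebraic alternative avoids Dehn--Nielsen: fix a geometric symplectic basis $a_1,b_1,\dots,a_g,b_g$ of the free group $\pi$, so that $\zeta=\prod_{i=1}^g[a_i,b_i]$; under the Witt isomorphism $[\pi,\pi]/[[\pi,\pi],\pi]\cong\Lambda^2 H$ (induced by the commutator map) this sends $\zeta$ to $\omega:=\sum_i a_i\wedge b_i$, and since $\varphi$ fixes $\zeta$ the induced map $\Lambda^2\varphi_\#$ fixes $\omega$. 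A brief computation then identifies the stabilizer of $\omega$ in $GL(H)$ with $Sp(H)$: viewing $\omega$ as a bivector map $H^\ast\to H$ and the pairing as $\flat\colon H\to H^\ast$, which are mutually inverse up to sign, both conditions ``$\Lambda^2\varphi_\#$ fixes $\omega$'' and ``$\varphi_\#$ is symplectic'' unwind to the single identity $\varphi_\#=\flat^{-1}(\varphi_\#^\ast)^{-1}\flat$.

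For the surjectivity of $MC(\Sigma)\to Sp(H)$, I argue by generators. It is classical, and is what \cite{MKS} provides, that $Sp(2g,\ZZ)$ is generated by the symplectic transvections $t_v\colon x\mapsto x+(x\cdot v)\,v$ with $v$ a primitive element of $H$ --- indeed by finitely many of these. Every primitive class in $H$ is represented by an embedded simple closed curve $c\subset\Sigma$, and the Dehn twist $T_c\in MC(\Sigma)$ acts on $H$ by $t_{[c]}$ up to a harmless sign (Picard--Lefschetz). Hence the image of $MC(\Sigma)\to Sp(H)$ contains a generating set of $Sp(H)$, so the map is onto.

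The two formal assertions need nothing beyond the universal property of $a$. The genuinely substantive inputs are the topological invariance of $\cdot$ (equivalently Fact~\ref{fact1}) and the generation of $Sp(2g,\ZZ)$ by transvections realized by Dehn twists; in a fully self-contained account the latter is the most work, while the $\Lambda^2H$ argument above shows that the symplectic condition can be extracted from $\varphi(\zeta)=\zeta$ purely algebraically. I expect the only real care to be pinning down signs and normalizations in the Witt isomorphism and the Picard--Lefschetz formula.
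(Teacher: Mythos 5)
The paper offers no proof of this statement: it is labelled a ``Fact,'' recalled as standard background, with the surjectivity onto $Sp(H)$ attributed to \cite{MKS}. Your write-up is a correct and essentially complete substitute for that citation. The universal-property argument for $p_\#$ and the functoriality argument for $\varphi_\#$ are exactly right. For the symplectic property your topological argument (realize $\varphi$ by a diffeomorphism fixing $\partial\Sigma$ pointwise, hence orientation-preserving, and use topological invariance of the intersection pairing) is the standard one; your algebraic alternative via the Witt isomorphism $\Gamma_1/\Gamma_2\cong\Lambda^2H$ sending $\zeta=\prod_i[a_i,b_i]$ to $\omega=\sum_i a_i\wedge b_i$, together with the identification of the $GL(H)$-stabilizer of $\omega$ with $Sp(H)$ (valid because the pairing is unimodular, so the bivector and the $2$-form determine each other), is correct and is a genuinely nice addition, since it derives the symplectic condition purely from $\varphi(\zeta)=\zeta$ without invoking realizability. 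For surjectivity you take the modern route (transvections in primitive vectors generate $Sp(2g,\ZZ)$, every primitive class is represented by a simple closed curve, and Dehn twists realize the transvections up to sign), whereas \cite{MKS} reaches the same conclusion through combinatorial group theory via $\Aut(F_{2g})\to GL(2g,\ZZ)$; both are legitimate, and yours is the more geometric and more commonly cited argument today. The only inputs you leave as black boxes --- generation of $Sp(2g,\ZZ)$ by transvections, representability of primitive classes by embedded curves, and the Picard--Lefschetz formula --- are exactly the classical facts one would expect to quote, and your sign caveats are handled correctly (both $T_c$ and $T_c^{-1}$ lie in $MC(\Sigma)$, and $t_v=t_{-v}$).
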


\subsection{The Torelli-Johnson-Morita theory}

The lower central series $\pi=\Gamma_0\supset \Gamma_1 \supset
\Gamma_2\supset \cdots $ of $\pi$ is recursively defined by taking
commutator groups $\Gamma_{k+1}=[\pi, \Gamma_k]$ for all $k\geq 0$.
The $k$-th nilpotent quotient of $\pi$ is given by
$N_k=\pi/\Gamma_k$, for $k\geq 0$. We have a short exact sequence
$$0\to{\mathcal L}_{k+1}\to N_{k+1}\to N_k\to 1,$$ where ${\mathcal L}_{k+1}=\Gamma_k/\Gamma_{k+1}$ is naturally identified with the degree $k+1$ part of the free Lie algebra
generated by $H$, cf.\  \cite{Labute, Morita99}. Note that $N_0=0$
and ${\mathcal L}_{1}=N_1=H$.

For each $k\geq 0$, denote by   $MC_k(\Sigma)$   the {\it $k$-th
Torelli subgroup of $MC(\Sigma)$} consisting of those mapping
classes that act identically on $N_k$. We obtain thus the {\it
Johnson filtration}
$$MC(\Sigma)=MC_0(\Sigma) >  MC_1(\Sigma) >  MC_2(\Sigma) > \cdots$$
of $MC(\Sigma)$. (Notation $F>G$ means in this context that $G$ is a
normal subgroup of $F$.)  The group $MC_1(\Sigma)$ is the {\it
classical Torelli group}.

For each $k\geq 1$, there is a {\it Johnson homomorphism}
\begin{equation}\label{JH}
\tau_k:MC_k(\Sigma)\to {\rm Hom}(N_{k+1},{\mathcal L}_{k+1})
\end{equation}
defined as follows.  If $\varphi\in MC_k(\Sigma)$ and $x\in
N_{k+1}$, then   $\varphi(x) x^{-1}\in N_{k+1}$ projects to  $0\in
N_k $. By   the exact sequence above, $\varphi(x) x^{-1}$ is the
image of a unique     $y_x\in {\mathcal L}_{k+1}$. The formula
$x\mapsto y_x$ defines a map $N_{k+1}\to {\mathcal L}_{k+1}$ denoted
$\tau_k(\varphi)$. The map $\tau_k(\varphi):N_{k+1}\to {\mathcal
L}_{k+1}$ is
  a homomorphism because for any $x_1, x_2\in N_{k+1}$,
$$
\varphi(x_1x_2)(x_1x_2)^{-1}=\varphi(x_1)~(\varphi(x_2)x_2^{-1})~x_1^{-1} = (\varphi(x_1)x_1^{-1})~(\varphi(x_2)x_2^{-1}).
$$
The second equality uses the fact that $\varphi(x_2)x_2^{-1}$ is
central in $N_{k+1}$ as it lies in the kernel of the projection
$N_{k+1}\to N_k$.

\begin{claim}\label{cl2} The map \eqref{JH} is a homomorphism.
\end{claim}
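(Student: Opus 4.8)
The plan is to check the additivity $\tau_k(\varphi\psi)=\tau_k(\varphi)+\tau_k(\psi)$ directly, where the sum is taken in the abelian group $\operatorname{Hom}(N_{k+1},\mathcal{L}_{k+1})$; this makes sense precisely because $\mathcal{L}_{k+1}$ is abelian, indeed $[\Gamma_k,\Gamma_k]\subseteq\Gamma_{2k}\subseteq\Gamma_{k+1}$ since $k\geq 1$. Fix $\varphi,\psi\in MC_k(\Sigma)$ and $x\in N_{k+1}$, and put $z=\psi(x)\,x^{-1}$. By the definition of $\tau_k$, the element $z$ lies in $\mathcal{L}_{k+1}=\Gamma_k/\Gamma_{k+1}$, which is central in $N_{k+1}$ because $[\pi,\Gamma_k]=\Gamma_{k+1}$, and, under the inclusion $\mathcal{L}_{k+1}\hookrightarrow N_{k+1}$, the element $z$ represents $\tau_k(\psi)(x)$. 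Now expand
\[
(\varphi\psi)(x)\,x^{-1}=\varphi(\psi(x))\,x^{-1}=\bigl(\varphi(\psi(x))\,\psi(x)^{-1}\bigr)\bigl(\psi(x)\,x^{-1}\bigr),
\]
so it suffices to identify the first factor with $\varphi(x)\,x^{-1}$, which represents $\tau_k(\varphi)(x)$: then $(\varphi\psi)(x)x^{-1}=(\varphi(x)x^{-1})\,z$ is a product of two elements of the abelian group $\mathcal{L}_{k+1}$, and reading this identity in $\mathcal{L}_{k+1}$ (written additively) gives $\tau_k(\varphi\psi)(x)=\tau_k(\varphi)(x)+\tau_k(\psi)(x)$ for every $x$, hence the claim.

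For the remaining point I would write $\psi(x)=xz$ and use that $\varphi$ fixes $z$. The latter is where some care is needed: since $k\geq 1$ we have $MC_k(\Sigma)\subseteq MC_1(\Sigma)$, so $\varphi$ acts trivially on $H=\mathcal{L}_1$; as $\varphi$ induces a graded Lie ring automorphism of $\operatorname{gr}(\pi)=\bigoplus_{j\geq 1}\mathcal{L}_j$ (the free Lie ring on $H$ recalled above), and this ring is generated in degree one, $\varphi$ acts as the identity on every $\mathcal{L}_j$, in particular on $\mathcal{L}_{k+1}\ni z$. Hence, using centrality of $z$,
\[
\varphi(\psi(x))\,\psi(x)^{-1}=\varphi(xz)\,(xz)^{-1}=\varphi(x)\,\varphi(z)\,z^{-1}\,x^{-1}=\varphi(x)\,z\,z^{-1}\,x^{-1}=\varphi(x)\,x^{-1},
\]
which completes the argument.

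The computation is structurally the same as the one already carried out for showing that $\tau_k(\varphi)$ is a homomorphism: a product in $N_{k+1}$ of an ordinary element with a central element of $\mathcal{L}_{k+1}$ is reorganized, and the central factor is passed through the outer automorphism. The one genuinely new ingredient, and the step I would state most carefully, is that every element of $MC_k(\Sigma)$ acts trivially on $\mathcal{L}_{k+1}$; this is exactly what forces the cross-term $\varphi(\psi(x))\psi(x)^{-1}$ to collapse to $\varphi(x)x^{-1}$, and it is where the hypothesis $k\geq 1$ is genuinely used. Everything else is routine bookkeeping with the central subgroup $\mathcal{L}_{k+1}\subset N_{k+1}$.
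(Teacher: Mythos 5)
Your proof is correct and follows essentially the same route as the paper: both decompose $\varphi(\psi(x))x^{-1}$ into a product of two central elements of $\mathcal{L}_{k+1}$ and both hinge on the same key fact that $\varphi\in MC_k(\Sigma)\subset MC_1(\Sigma)$ acts trivially on $H$ and hence on $\mathcal{L}_{k+1}$, since the associated graded Lie ring is generated in degree one. The only cosmetic difference is that you peel off the factor $\psi(x)x^{-1}$ and rewrite $\varphi(\psi(x))\psi(x)^{-1}$ via $\psi(x)=xz$, whereas the paper peels off $\varphi(x)x^{-1}$ and applies the trivial $\varphi$-action directly to $\varphi(\psi(x)x^{-1})$.
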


\begin{proof}
For any $\varphi, \psi \in MC_k(\Sigma)$, we have
$$\aligned
(\varphi\circ\psi(x))x^{-1}&=\varphi(\psi(x))x^{-1}\\
&=\varphi(\psi(x)x^{-1})~\varphi(x)x^{-1}\\
&=\psi(x)x^{-1}~\varphi(x)x^{-1}=\varphi(x)x^{-1}~\psi(x)x^{-1},\\
\endaligned$$
where the last equality follows from the commutativity of ${\mathcal
L}_{k+1}$   and the next-to-last equality holds because ${\mathcal
L}_{k+1}$ is pointwise invariant under the automorphism of $N_{k+1}$
induced by $\varphi$. This is so because $\varphi \in
MC_k(\Sigma)\subset MC_1(\Sigma)$ acts as the identity in
$H=\Gamma_0/\Gamma_1$ and therefore acts as the identity on all the
quotients $\Gamma_k/\Gamma_{k+1}$.
\end{proof}

By    construction,  the kernel of the Johnson homomorphism $\tau_k$
is $MC_{k+1}(\Sigma)$. To study the image of $\tau_k$, we identify
$$
 {\rm Hom}(N_{k+1},{\mathcal L}_{k+1})
\cong {\rm Hom}(H,{\mathcal L}_{k+1})
\cong  H^*\otimes{\mathcal L}_{k+1}
\cong H\otimes{\mathcal L}_{k+1}
$$
  where we  use  the fact that ${\mathcal
L}_{k+1}$ is abelian in the first isomorphism and the Poincar\'e
duality in the last isomorphism.

\begin{claim}\label{cl3} \cite{Morita93}
The image of $\tau_k$ in $H\otimes{\mathcal L}_{k+1}$ lies in the
kernel of the additive bracket map $ b_k:H\otimes{\mathcal L}_{k+1}
\to {\mathcal L}_{k+2}$ carrying $x\otimes u$ to $[x,u]$ for all
$x\in H$ and $u\in {\mathcal L}_{k+1}$.
\end{claim}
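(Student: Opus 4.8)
The plan is to bring in the one property of $\varphi\in MC_k(\Sigma)$ that has not yet been used, namely that $\varphi$ fixes the boundary class $\zeta\in\pi$. First I would choose free generators $\alpha_1,\beta_1,\dots,\alpha_g,\beta_g$ of $\pi$ with $\zeta=\prod_{i=1}^g[\alpha_i,\beta_i]$ (or its inverse, depending on the orientation convention; this is immaterial) and such that their images $a_1,b_1,\dots,a_g,b_g$ in $H$ form a symplectic basis. Then the class of $\zeta$ in $\mathcal L_2=\Gamma_1/\Gamma_2$ is the symplectic element $\omega=\sum_{i=1}^g[a_i,b_i]$. Writing $d=\tau_k(\varphi)\colon H\to\mathcal L_{k+1}$ and letting $D$ be the unique derivation of degree $k$ of the free Lie algebra $\bigoplus_{j\ge1}\mathcal L_j$ extending $d$, the goal becomes to show that $\varphi(\zeta)=\zeta$ forces $D(\omega)=0$. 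Granting this, the Leibniz rule gives $D(\omega)=\sum_i\bigl([d(a_i),b_i]+[a_i,d(b_i)]\bigr)$, and unwinding the chain of isomorphisms ${\rm Hom}(N_{k+1},\mathcal L_{k+1})\cong{\rm Hom}(H,\mathcal L_{k+1})\cong H\otimes\mathcal L_{k+1}$ used just above the statement — under which Poincar\'e duality identifies the dual basis vectors $a_i^*,b_i^*$ with $\mp b_i,\pm a_i$ — this expression is exactly $b_k(\tau_k(\varphi))$ up to the overall sign in the duality convention. Hence $b_k(\tau_k(\varphi))=0$, which is the claim.

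The technical heart will be the behaviour of $\varphi$ on a single commutator of generators modulo $\Gamma_{k+2}$. I would write $\varphi(\alpha_i)=u\alpha_i$ and $\varphi(\beta_i)=v\beta_i$, where $u=\varphi(\alpha_i)\alpha_i^{-1}$ and $v=\varphi(\beta_i)\beta_i^{-1}$ lie in $\Gamma_k$ because $\varphi\in MC_k(\Sigma)$, and whose classes in $\mathcal L_{k+1}$ are $d(a_i)$ and $d(b_i)$ by the definition of $\tau_k$. Expanding $\varphi([\alpha_i,\beta_i])=[u\alpha_i,v\beta_i]$ with the commutator identities $[xy,z]={}^x[y,z]\,[x,z]$ and $[x,yz]=[x,y]\,{}^y[x,z]$ gives
\[
[u\alpha_i,v\beta_i]={}^u[\alpha_i,v]\cdot{}^{uv}[\alpha_i,\beta_i]\cdot[u,v]\cdot{}^v[u,\beta_i],
\]
and since $k\ge1$ every conjugation occurring here is trivial modulo $\Gamma_{k+2}$ (because $[\Gamma_k,\Gamma_1]\subseteq\Gamma_{k+2}$ and $[\Gamma_k,\Gamma_{k+1}]\subseteq\Gamma_{2k+2}\subseteq\Gamma_{k+2}$) while $[u,v]\in[\Gamma_k,\Gamma_k]\subseteq\Gamma_{2k+1}\subseteq\Gamma_{k+2}$ drops out, so that
\[
\varphi([\alpha_i,\beta_i])\,[\alpha_i,\beta_i]^{-1}\equiv[\alpha_i,v]\,[u,\beta_i]\pmod{\Gamma_{k+2}}.
\]
In $\mathcal L_{k+2}=\Gamma_{k+1}/\Gamma_{k+2}$, where the right-hand side is central, the class of this element is $[a_i,d(b_i)]+[d(a_i),b_i]=D([a_i,b_i])$.

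To assemble things, $\varphi(\zeta)=\zeta$ yields $1=\varphi(\zeta)\zeta^{-1}$ in $N_{k+2}=\pi/\Gamma_{k+2}$. Each factor $\varphi([\alpha_i,\beta_i])[\alpha_i,\beta_i]^{-1}$ lies in $\Gamma_{k+1}$: since $\tau_k(\varphi)$ is a homomorphism into the abelian group $\mathcal L_{k+1}$ it vanishes on the image in $N_{k+1}$ of any element of $\Gamma_1$, so $\varphi(x)x^{-1}\in\Gamma_{k+1}$ for every $x\in\Gamma_1$. Because $\Gamma_{k+1}/\Gamma_{k+2}$ is central in $N_{k+2}$, the ordered product $\varphi(\zeta)\zeta^{-1}=\prod_i\varphi([\alpha_i,\beta_i])\cdot\bigl(\prod_i[\alpha_i,\beta_i]\bigr)^{-1}$ collapses, modulo $\Gamma_{k+2}$, to $\sum_{i=1}^g\varphi([\alpha_i,\beta_i])[\alpha_i,\beta_i]^{-1}$ read in $\mathcal L_{k+2}$. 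Combining this with the previous paragraph gives
\[
0=\sum_{i=1}^g\bigl([a_i,d(b_i)]+[d(a_i),b_i]\bigr)=D(\omega)
\]
in $\mathcal L_{k+2}$, completing the argument.

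The main obstacle I anticipate is bookkeeping rather than anything conceptual: keeping the reductions modulo $\Gamma_{k+2}$ honest (the case $k=1$ is the tightest, since there $[\Gamma_k,\Gamma_k]$ only just lands in $\Gamma_{k+2}$), checking carefully that $\zeta\in\Gamma_1$ — which is where $g\ge1$ enters — and tracking the Poincar\'e-duality sign convention precisely enough to be sure the vanishing sum is genuinely $b_k(\tau_k(\varphi))$ and not its negative or a scalar multiple.
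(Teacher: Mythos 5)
The paper gives no proof of this claim at all---it is simply cited to \cite{Morita93}---so there is nothing internal to compare against; what you have written is, in substance, Morita's original argument. Your proof is correct and complete: the only property of $\varphi$ not yet exploited is $\varphi(\zeta)=\zeta$, the class of $\zeta=\prod_i[\alpha_i,\beta_i]$ in $\mathcal L_2$ is the symplectic element $\omega=\sum_i[a_i,b_i]$, and under the intersection-pairing identification $H\cong H^*$ (which sends $b_i^*\mapsto a_i$, $a_i^*\mapsto -b_i$) one checks that $b_k(\tau_k(\varphi))=\sum_i\bigl([d(a_i),b_i]+[a_i,d(b_i)]\bigr)=D(\omega)$ exactly, so the hedged sign ambiguity is harmless. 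The commutator expansion, the filtration estimates $[\Gamma_i,\Gamma_j]\subseteq\Gamma_{i+j+1}$ (which do cover the tight case $k=1$ via $[\Gamma_1,\Gamma_1]\subseteq\Gamma_3$), the observation that $\varphi(x)x^{-1}\in\Gamma_{k+1}$ for $x\in\Gamma_1$, and the centrality argument collapsing the product in $N_{k+2}$ are all correct as stated.
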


\subsection{Fatgraphs and spines}

Let $G$ be a finite connected 1-dimensional CW-complex   with the
set of vertices $V(G)$, the set of edges $E(G)$ and the set of
oriented edges $\tilde E(G)$. Reversing the orientation of an edge,
we obtain a canonical involution $e\mapsto \bar e$ on $\tilde E(G)$.
An edge of the first barycentric subdivision of $G$ is a {\it
half-edge}.   Given a half-edge containing a vertex $v$ of $G$, we
assign the ambient edge in $E(G)$ the orientation towards $v$. In
this way, we identify the set of half-edges of $G$ with $\tilde
E(G)$.

The number of half-edges incident on a vertex of $G$ is the {\it
valence} of this vertex. We shall always assume that each vertex of
$G$ has valence at least three except for a single univalent vertex,
whose incident edge is called the {\it tail}. The tail has a
preferred orientation $t \in \tilde E(G)$ pointing away from its
univalent endpoint. We call $G$ a  {\it fatgraph (with tail)} if it
is endowed with the additional structure given by a family of cyclic
orderings, one such cyclic ordering on each collection of half-edges
incident to a common vertex.

A fatgraph $G$ determines a  compact connected oriented   surface
$\Sigma(G)$ as follows. For each vertex $v\in V(G)$ of valence $k$,
consider an oriented polygon $P_v$ of $2k$ sides, where every other
consecutive side corresponds to a half-edge incident on $v$ and
where the given cyclic ordering about $v$ corresponds to traversing
the boundary of $P_v$ with the polygon on the left. The surface
$\Sigma(G)$ is the quotient of the disjoint union $\sqcup_{v\in
V(G)} P_v$,  where the sides of polygons are identified by an
orientation-reversing homeomorphism if the corresponding half-edges
lie in a common edge of $G$. By the \emph{genus}  and the
\emph{boundary number of} $G$ we shall mean respectively the genus
of $\Sigma(G)$ and the number of components of $\partial \Sigma(G)$.
Note that if the boundary number of $G$ is equal to $1$, then there
is a canonical strict order on $\tilde E(G)$ determined by order of
appearance when traversing $\partial \Sigma(G)$ starting from the
tail $t$.

A {\it fatgraph spine} of $\Sigma=\Sigma_{g,1}$ is a  pair
consisting of a fatgraph (with tail) $G$ and the isotopy class
relative to the boundary of an embedding $i:G\to \Sigma$ such that
$i(G)$
  is a strong deformation retract of $\Sigma$,    the
cyclic orderings about vertices of $G$ agree with the
counter-clockwise sense in~$\Sigma$, and
 $i^{-1}(\partial \Sigma)=i^{-1}(\partial \Sigma-\{ *\})$ is the univalent vertex of~$G$.
 Any such   $i$ extends to an orientation-preserving
 homeomorphism $\Sigma(G)\approx \Sigma$.
 Thus, the fatgraph $G$
has genus $g$ and boundary number 1.

\subsection{  Mapping Class Groupoids} The mapping class group $MC(\Sigma)$ acts without
isotropy on the contractible {\it Teichm\"uller space} $T(\Sigma)$
of isotopy classes of hyperbolic structures with geodesic boundary
on $\Sigma$. The  quotient $ T(\Sigma)/MC(\Sigma)$ is the {\it
Riemann moduli space} of $\Sigma$, cf.\ \cite{Penbook}.

\begin{theorem} \cite{Penbook} \label{thmone}
For  $\Sigma=\Sigma_{g,1}$ with any $g\geq 1$, there is a $MC(\Sigma
)$-invariant ideal simplicial decomposition of the Teichm\"uller
space $T(\Sigma )$, where cells are indexed by fatgraph spines of
$\Sigma$  and the face relation is given by contracting non-tail
edges with distinct endpoints.
\end{theorem}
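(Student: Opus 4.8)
The plan is to realize the cell decomposition inside the \emph{decorated} Teichm\"uller space and then push it down to $T(\Sigma)$. Write $q$ for the boundary circle $\partial\Sigma$ (equivalently, the puncture obtained by collapsing it), and let $\tilde T(\Sigma)$ denote the space of hyperbolic structures on $\Sigma$ together with a horocycle $h$ encircling $q$, the basepoint $\ast$ being recorded as a distinguished point on the horocyclic link of $q$. Forgetting $h$ gives a principal $\mathbb R_{>0}$-bundle $\tilde T(\Sigma)\to T(\Sigma)$ on which $MC(\Sigma)$ acts compatibly with its action on $T(\Sigma)$, and $\tilde T(\Sigma)$ is contractible. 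It therefore suffices to produce an $MC(\Sigma)$-invariant decomposition of $\tilde T(\Sigma)$ into cones, invariant under the $\mathbb R_{>0}$-rescaling of $h$ and indexed by fatgraph spines, with the stated face relation; quotienting by the rescaling then yields the asserted ideal simplicial decomposition of $T(\Sigma)$.

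The engine is the convex hull construction of \cite{Penbook}. Lift a decorated structure to $\mathbb H^2\subset\mathbb R^{2,1}$ with Fuchsian group $\Gamma\cong\pi$; the decoration lifts to a $\Gamma$-invariant, locally finite set of points on the positive light cone $L^+$, one point over each lift of $q$, its position along the corresponding ray fixed by the size of $h$. Form the closed convex hull of this set in $\mathbb R^{2,1}$. Its frontier is a union of finitely many $\Gamma$-orbits of codimension-one faces, each the convex hull of a finite subset of the light-cone points; radially projecting these faces into $\mathbb H^2$ and passing to the quotient by $\Gamma$ produces a decomposition of $\Sigma$ into ideal polygons, that is, a filling system $A$ of disjoint ideal arcs based at $q$. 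Taking Poincar\'e duals, $A$ corresponds to the fatgraph spine $G=G(A)$ whose vertices are the polygons of $A$, whose non-tail edges are the arcs of $A$ (fattened by the orientation of $\Sigma$), and whose tail is inserted at the distinguished corner at $q$ containing $\ast$; this is a fatgraph spine of $\Sigma$, necessarily of genus $g$ and boundary number $1$.

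Now to each fatgraph spine $G$ associate the set $C(G)\subset\tilde T(\Sigma)$ of decorated structures whose convex hull has combinatorial type dual to $G$. Two facts from \cite{Penbook} make $\{C(G)\}$ into the desired decomposition. First, for a fixed ideal triangulation $\Delta$, with dual trivalent spine $G_\Delta$, the $\lambda$-lengths of the arcs of $\Delta$ define a real-analytic homeomorphism of $\tilde T(\Sigma)$ onto the positive orthant $\mathbb R_{>0}^{E(\Delta)}$; under this homeomorphism, $C(G)$ for any $G$ refined by $G_\Delta$ is carried to the relatively open cone obtained by setting to $0$ the coordinates of the arcs of $\Delta$ not belonging to the sub-arc-system dual to $G$ and keeping the remaining coordinates positive. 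Hence each $C(G)$ is an open cell, its closure is the union of the $C(G')$ with $G'$ refined by $G$, and the $C(G)$ cover $\tilde T(\Sigma)$ and are pairwise disjoint. Second, if two top-dimensional cells $C(G_\Delta)$ and $C(G_{\Delta'})$ meet along a codimension-one face, then $\Delta'$ arises from $\Delta$ by a single flip, and the Ptolemy relation exhibits the $\lambda$-lengths of $\Delta'$ as positive rational functions of those of $\Delta$; thus the two coordinate charts agree across the common face, and the cells assemble into a bona fide cell complex, which after projectivization is ideal simplicial. A face of $C(G)$ corresponds to enlarging $A$ by new arcs, equivalently to contracting in $G$ a set of non-tail edges with distinct endpoints — contracting a loop or the tail would not yield a fatgraph spine of $\Sigma=\Sigma_{g,1}$ — which is precisely the asserted face relation. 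The entire construction is natural under diffeomorphisms of $\Sigma$, hence $MC(\Sigma)$-invariant, and it is invariant under rescaling $h$ (rescaling all of $L^+$ rescales the convex hull without altering its combinatorics), so it descends along $\tilde T(\Sigma)\to T(\Sigma)$ to the claimed decomposition.

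The substantive point, and where the real work lies, is the claim made in the second paragraph: that \emph{every} decorated structure has a convex hull whose frontier projects to a \emph{filling} arc system on $\Sigma$ — with no degenerate lower-dimensional faces, and with all complementary regions disks — and that the combinatorial type of this frontier is locally constant on $\tilde T(\Sigma)$, jumping only across the codimension-one walls described above, the resulting stratification being locally finite. Verifying this requires a direct analysis, via the Minkowski geometry of $L^+$ and of horocycles, of how the supporting hyperplanes of the convex hull vary with the decorated structure; this is exactly the content of the convex hull theorem of \cite{Penbook}, which I would invoke rather than reprove.
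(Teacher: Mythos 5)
The paper offers no proof of this statement at all: it is quoted verbatim from \cite{Penbook}, and your outline is precisely the strategy of that source --- pass to the decorated Teichm\"uller space, run the light-cone convex hull construction to assign to each decorated structure a filling ideal arc family (dually, a fatgraph spine with tail at the distinguished boundary corner), and check naturality under $MC(\Sigma)$ and under rescaling of the decoration. So in approach you are aligned with the reference the paper leans on, and you are right to identify the convex hull theorem as the substantive input to be invoked rather than reproved.

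There is, however, one step as written that would fail. You assert that under the $\lambda$-length homeomorphism $\tilde T(\Sigma)\xrightarrow{\ \approx\ }\mathbb R_{>0}^{E(\Delta)}$ the cell $C(G)$, for $G$ obtained from $G_\Delta$ by contraction, is ``the relatively open cone obtained by setting to $0$ the coordinates of the arcs of $\Delta$ not belonging to the sub-arc-system dual to $G$.'' That cannot be right: the image of the $\lambda$-length chart is the \emph{open} orthant, and $C(G)$ is a nonempty subset of $\tilde T(\Sigma)$, so no $\lambda$-length vanishes on it. The locus where an arc $e\in\Delta$ drops out of the convex hull is cut out by the vanishing of Penner's \emph{simplicial (coupling) coordinate} $Z_e$, a nonlinear rational expression in the $\lambda$-lengths detecting coplanarity in $\mathbb R^{2,1}$ of the lifts of the two triangles adjacent to $e$; the cells are linear cones only in the $Z$-coordinates, and the identification of $\tilde T(\Sigma)$ with the space of positively weighted filling arc families (the ``arithmetic problem'' of solving for $\lambda$-lengths given putative $Z$'s) is exactly the hard content of the convex hull theorem. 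Since you defer that theorem to \cite{Penbook} anyway, the fix is only to replace ``$\lambda$-lengths'' by ``simplicial coordinates'' in the description of the face structure and to drop the claim that the $\lambda$-length chart linearizes the cells; with that correction the outline is a faithful summary of the cited proof.
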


\begin{figure}[!h]
\begin{center}
\epsffile{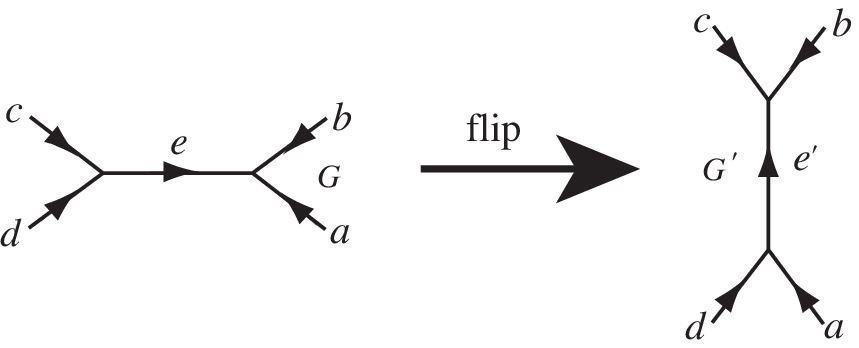}
\caption{A flip}
\label{figone}
\end{center}
\end{figure}

The dual of this decomposition of $T(\Sigma)$ is   the {\it dual
fatgraph complex} ~$\hat{\mathcal G}=\hat{\mathcal G}(\Sigma)$.
The vertices of $\hat{\mathcal G}$ correspond to trivalent fatgraph
spines, and the oriented 1-simplices correspond to {\it flips},
namely, the collapse and distinct expansion of a non-tail edge with
distinct endpoints in a trivalent fatgraph spine, cf.\ Figure
\ref{figone}. Non-degenerate 2-cells in $\hat{\mathcal G}$
correspond to fatgraph spines   whose vertices are trivalent except
for either one 5-valent vertex or two 4-valent vertices.

Suppose that $G'$ arises from $G$ by a flip along an edge $e\in
E(G)$ and let $e'\in E(G')$ denote the corresponding edge as in
Figure \ref{figone}.  This correspondence extends to an
identification of $E(G)$ with $E(G')$ in the obvious way.  This
lifts to an identification of $\tilde E(G)$ with $\tilde E(G')$ by
demanding that the oriented edges   $e,e'$   in this order agree
with the orientation of $\Sigma$ and the  orientation of the other
edges is unchanged as in Figure \ref{figone}.

Recall that the {\it fundamental path groupoid} of a CW-complex $X$ is
a category whose objects are the vertices of $X$ and whose morphisms
are the homotopy classes of edge-paths in $X$. The {\it Ptolemy
groupoid} $Pt(\Sigma)$ of $\Sigma$ is the fundamental path groupoid
of $\hat{\mathcal G} (\Sigma)$.

\begin{corollary} \cite{Penbook} \label{corone}
For any $g\geq 1$, the objects of the Ptolemy groupoid  $Pt(\Sigma)$
of $\Sigma=\Sigma_{g,1}$ are the trivalent fatgraph spines of
$\Sigma$, and the morphisms are   finite sequences of flips subject
to the following relations:

\vskip .2cm

\leftskip=.4cm\rightskip=.4cm

\noindent {\rm [Involutivity]}~ the flip along an edge followed by the flip along its corresponding edge yields the identity;

\vskip .2cm

\noindent {\rm [Commutativity]}~  the flips along disjoint edges
commute;

\vskip .2cm

\noindent
{\rm [Pentagon Relation]}~ if two edges share a single endpoint, then the unique
 sequence of (five) flips along them and their corresponding edges that contains no involutive pair of flips yields the identity.

\end{corollary}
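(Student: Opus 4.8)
The plan is to derive the presentation from the standard description of the fundamental path groupoid of a two-dimensional CW-complex, applied to $X=\hat{\mathcal G}=\hat{\mathcal G}(\Sigma)$, reading off the cells of $X$ in dimensions $\le 2$ from Theorem~\ref{thmone}. Recall the general fact that the fundamental path groupoid of $X$ depends only on its $2$-skeleton and is presented as follows: the objects are the $0$-cells; the generators are the oriented $1$-cells, the reversal $\bar c$ of a $1$-cell $c$ being a two-sided inverse of $c$, so that $c\bar c$ and $\bar c c$ are identity morphisms; and the defining relations state that the boundary edge-loop of each $2$-cell is the identity. For $X=\hat{\mathcal G}$ the objects are precisely the trivalent fatgraph spines of $\Sigma$ by Theorem~\ref{thmone}, which proves the first assertion; it then remains to identify the oriented $1$-cells with flips (so that the relations $c\bar c=\id$ become Involutivity) and to determine the boundary loops of the $2$-cells.

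First I would treat the $1$-cells. By Theorem~\ref{thmone}, a codimension-one cell of the decomposition of $T(\Sigma)$ is indexed by a spine $G_0$ having a single $4$-valent vertex and all other vertices trivalent, and it is a face of exactly two trivalent spines $G$ and $G'$, obtained from $G_0$ by the two distinct expansions of the $4$-valent vertex at a non-tail edge with distinct endpoints. Hence the dual oriented $1$-cell $c$ is an edge from $G$ to $G'$: it is the flip along the edge $e\in E(G)$ collapsed to form $G_0$, and, under the identification of $E(G)$ with $E(G')$ fixed just before the statement, its reversal $\bar c$ is the flip along the corresponding edge $e'\in E(G')$ (collapse $e'$ in $G'$ and perform the other expansion, which recovers $G$). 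Thus the oriented $1$-cells are exactly the flips, and the relations $c\bar c=\id$ are exactly the Involutivity relations.

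Next I would treat the $2$-cells. Again by Theorem~\ref{thmone}, passing to a codimension-one face collapses a non-tail edge with distinct endpoints and so merges two vertices; since every spine is obtained from a trivalent one by expansions, the codimension of the cell indexed by a spine $G$ equals $\sum_{v}\bigl(\mathrm{val}(v)-3\bigr)$, the sum over vertices of valence $\ge 3$. Hence a codimension-two spine has either two $4$-valent vertices with the rest trivalent, or one $5$-valent vertex with the rest trivalent; these index the non-degenerate $2$-cells of $\hat{\mathcal G}$, and any degenerate $2$-cell has a backtracking boundary loop and contributes no relation beyond Involutivity. In the first case the two $4$-valent vertices are expanded independently, so the four trivalent spines obtained form a square whose boundary loop reads ``flip along $e$, flip along the disjoint edge $f$, flip back along $e$, flip back along $f$'': this is the Commutativity relation for the disjoint edges $e,f\in E(G)$. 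In the second case the expansions of the $5$-valent vertex, together with the flips between them, are the five triangulations of a convex pentagon with their diagonal exchanges, so the $2$-cell is a pentagon whose five sides are the flips along the two edges incident to the $5$-valent vertex and along their corresponding edges, in the unique cyclic order with no involutive consecutive pair: this is the Pentagon Relation. Since these are all the relations, the groupoid is presented exactly as claimed.

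The step that requires genuine care is this last one: checking that the link of a $5$-valent vertex, with its flip moves, is combinatorially a pentagon --- equivalently, that the five triangulations of a convex pentagon form a single $5$-cycle under diagonal flips --- and that the boundary of the corresponding $2$-cell is swept out by the indicated five flips without backtracking. One should also keep in mind that the decomposition of Theorem~\ref{thmone} is an \emph{ideal} simplicial decomposition rather than a regular CW-complex, so the argument is carried out entirely on the dual $2$-complex $\hat{\mathcal G}$ --- which is all the fundamental path groupoid detects --- instead of with genuine attaching maps in $T(\Sigma)$.
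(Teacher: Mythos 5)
Your argument is correct and follows the same route the paper intends: Corollary~\ref{corone} is presented as a direct consequence of Theorem~\ref{thmone} via the dual complex $\hat{\mathcal G}$, whose $0$-, $1$-, and non-degenerate $2$-cells are identified (in the paragraph preceding the statement, with details deferred to \cite{Penbook}) with trivalent spines, flips, and the square/pentagon configurations exactly as you describe, after which the edge-path groupoid presentation gives the three relations. The only point the paper does not spell out, and which you rightly flag as the step needing care, is the combinatorial check that the link of a $5$-valent vertex is a pentagon of flips.
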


The action of  $MC(\Sigma)$ on  $T(\Sigma)$ induces  a fixed point
free action of $MC(\Sigma)$ on the CW-complex $\hat{\mathcal G} $.
 The {\it mapping class groupoid} $M\Gamma(\Sigma)$ of $\Sigma$
is the fundamental path groupoid of the quotient CW-decomposition $
\hat{\mathcal G}(\Sigma)/MC(\Sigma)$ of the Riemann moduli space of
$\Sigma$. The objects of $M\Gamma(\Sigma)$  are the
$MC(\Sigma)$-orbits of trivalent fatgraph spines of $\Sigma$ and the
morphisms are generated by $MC(\Sigma)$-orbits of flips subject to
$MC(\Sigma)$-orbits of the relations above.  The mapping class group
$MC(\Sigma)$ is isomorphic to the group of automorphisms of any
object of $M\Gamma(\Sigma)$.

\section{Markings}

Throughout this section we fix  a group $K$.

\subsection{Markings on $\pi$}\label{auto} A {\it $K$-marking} on
$\pi=\pi_1(\Sigma,*)$ is an epimorphism $\pi\xrightarrow{\it p}K$. A
{\it $\nabla$-automorphism} of a marking $p$ is an automorphism
$\varphi$ of $\pi$ such that the following diagram commutes:
$$\begin{array}{cccccc}
&\pi&&\xrightarrow{~~~\varphi~~~}&&\pi\\\\
&&p~\searrow&&\swarrow~p\\\\
&&&K\, .\\
\end{array}$$
A {\it $\Box$-automorphism} of $p$ is  an automorphism $\varphi$ of $\pi$ such that $\varphi (\Ker p)=\Ker p$.
Such a $\varphi$ induces   an automorphism $\tilde \varphi$ of $K$ making the
following diagram commute:
$$\begin{array}{cccccc}
&& \pi&\xrightarrow{~~~~\varphi~~~~}&\pi\\\\
&&p~\downarrow&&\downarrow~p\\\\
&&K&\xrightarrow{~~~\tilde \varphi~~~}&K\,.\\
\end{array}$$
 The $\nabla$--automorphisms (resp.\@    $\Box$-automorphisms) of $p$
   form a group under composition,
$\Aut_\nabla(p)$ (resp.,\@ $\Aut_\Box(p)$). We have an obvious
exact sequence
$$1\to \Aut_\nabla(p)\to \Aut_\Box(p)\to \Aut(K).$$
In general, the projection $\Aut_\Box(p)\to \Aut(K)$ is not
surjective.

A $\nabla$- or a  $\Box$-automorphism of $p$ is {\it topological} if
it  is realizable by a
self-diffeomorphism  of $\Sigma$ that fixes $\partial \Sigma$
pointwise. We define the {\it $p$-relative $\nabla$- and
$\Box$-mapping class groups} of $\Sigma$ to be the respective
subgroups of topological automorphisms
$$\aligned
MC_\nabla(\Sigma;p)<\Aut_\nabla(p) \qquad {\rm and} \qquad  MC_\Box(\Sigma;p)<\Aut_\Box(p)\, .
\endaligned$$
  Clearly, $MC_\nabla(\Sigma;p)\leq MC(\Sigma)$ and
$MC_\Box(\Sigma;p)\leq MC(\Sigma) \rtimes \Aut(K)$.

\begin{example}
If $K=(1)$, then $MC_\Box(\Sigma;p)=MC_\nabla(\Sigma;p)$
$=MC(\Sigma)$.
\end{example}

\begin{example}\label{a_k}
For all $k\geq 1$ and the   projection $\pi\xrightarrow{a_k}N_k$, we
recover the $k$-th Torelli group
 $MC_\nabla(\Sigma;a_k)=MC_k(\Sigma)$ while $MC_\Box(\Sigma;a_k)=MC(\Sigma)$.
 In fact,
$MC_\nabla(\Sigma;p)=MC_k(\Sigma)$ and
$MC_\Box(\Sigma;p)=MC(\Sigma)$ for any $N_k$-marking
$\pi\xrightarrow{p} N_k$ because   such a marking is the composition
of $a_k$ with an automorphism of $N_k$. This  follows from   the
Hopfian property of~$N_k$, see \cite{MKS}, Section 5.3, Theorem 5.5.
\end{example}

\begin{example}
Let $F=\pi_1(A, \ast)$ where $A$ is a genus $g$ handlebody bounded
by  $\Sigma$ capped off by a 2-disk. If $p:\pi\to F$ is the
inclusion homomorphism, then   $MC_\Box(\Sigma;p)$ is the
corresponding handlebody group relative to a disk, cf. \cite{MCM},
\cite{Suzuki}.
\end{example}

The following proposition is a direct consequence of   Fact
\ref{fact2}.

\begin{proposition}
For any abelian group $K$ and any $K$-marking $p$ on $\pi $,
 we have the short exact sequence
$$1\to MC_\nabla(\Sigma;p)\to MC_\Box(\Sigma;p)\to I_K^H\to 1,$$
where $I_K^H$ is the group of all  $\psi\in \Aut(K)$ such that there
is a symplectomorphism $\Psi\in Sp(H)$ making the diagram
$$\begin{array}{cccccc}
&& H&\xrightarrow{~~~~\Psi~~~~}&H\\\\
&&{p_\#}~\downarrow&&\downarrow~{p_\#}\\\\
&&K&\xrightarrow{~~~\psi~~~}&K\\
\end{array}$$
commute (here  ${p_\#}$ is induced by $p$ as in Fact \ref{fact2}).
\end{proposition}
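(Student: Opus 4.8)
The plan is to verify exactness of the sequence term by term, using Facts \ref{fact1} and \ref{fact2} to pass freely between $\pi$, $H$, and the intersection pairing. First I would observe that any $\varphi \in MC_\Box(\Sigma;p)$ induces $\varphi_\# \in Sp(H)$ (Fact \ref{fact2}) together with $\tilde\varphi \in \Aut(K)$, and the commutativity of the $\Box$-diagram at the level of $\pi$, composed with the abelianization $a : \pi \to H$ and the factorization $p = p_\# \circ a$ from Fact \ref{fact2}, yields exactly the commuting square $p_\# \circ \varphi_\# = \tilde\varphi \circ p_\#$. Hence $\tilde\varphi \in I_K^H$ (witnessed by $\Psi = \varphi_\#$), and the assignment $\varphi \mapsto \tilde\varphi$ defines a homomorphism $MC_\Box(\Sigma;p) \to I_K^H$; this is well defined since $\tilde\varphi$ is uniquely determined by $\varphi$ because $p$ is surjective.

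Next I would identify the kernel. If $\varphi \in MC_\Box(\Sigma;p)$ maps to $\tilde\varphi = \id_K$, then the $\Box$-diagram for $\varphi$ degenerates to the $\nabla$-diagram $p \circ \varphi = p$, so $\varphi \in MC_\nabla(\Sigma;p)$; conversely every $\nabla$-automorphism is a $\Box$-automorphism with $\tilde\varphi = \id$. This gives exactness at $MC_\Box(\Sigma;p)$ and injectivity of $MC_\nabla(\Sigma;p) \hookrightarrow MC_\Box(\Sigma;p)$ (which is the inclusion of subgroups of $\Aut(\pi)$, hence automatically injective).

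The one remaining point — and the step I expect to be the main obstacle — is surjectivity onto $I_K^H$. Given $\psi \in I_K^H$ with a witnessing symplectomorphism $\Psi \in Sp(H)$ satisfying $p_\# \circ \Psi = \psi \circ p_\#$, I must produce an actual mapping class $\varphi \in MC(\Sigma)$ realizing $\Psi$ at the level of $H$ and preserving $\Ker p$ in the appropriate compatible way. Here I would invoke the classical surjectivity of $MC(\Sigma) \to Sp(H)$ from the end of Fact \ref{fact2} to lift $\Psi$ to some $\varphi_0 \in MC(\Sigma)$ with $(\varphi_0)_\# = \Psi$. The issue is that $\varphi_0$ need only respect $\Ker p$ \emph{modulo commutators}: since $p = p_\# \circ a$ factors through $H$ and $(\varphi_0)_\# = \Psi$ satisfies $p_\# \circ \Psi = \psi \circ p_\#$, we get $p \circ \varphi_0 = p_\# \circ \Psi \circ a = \psi \circ p_\# \circ a = \psi \circ p$ on the nose, so $\varphi_0(\Ker p) = \Ker(p \circ \varphi_0^{-1}) = \Ker(\psi^{-1} \circ p) = \Ker p$. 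Thus $\varphi_0 \in MC_\Box(\Sigma;p)$ already, and its induced automorphism of $K$ is precisely $\psi$; I should double-check that $p$ factoring through abelianization is exactly what removes any obstruction, which is immediate because $K$ is abelian. Assembling these observations gives the exact sequence, and I would close by noting that, as in Fact \ref{fact2}, the relevant $\Psi$ automatically preserves the intersection pairing so the group $I_K^H \le \Aut(K)$ is well posed.
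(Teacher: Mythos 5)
Your proof is correct and takes essentially the same approach as the paper, which simply notes that the proposition is a direct consequence of Fact \ref{fact2}; your argument is exactly the elaboration of that remark, using the factorization $p=p_\#\circ a$ (valid because $K$ is abelian) and the surjectivity of $MC(\Sigma)\to Sp(H)$ to handle the only nontrivial point, surjectivity onto $I_K^H$.
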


\begin{question}
Is there a better description of the group
$I_K^H$?
\end{question}

\begin{question}
When are $MC_\nabla(\Sigma;p)$ or $MC_\Box(\Sigma;p)$ finitely generated
and/or finitely presented?
\end{question}

\subsection{Markings on fatgraphs}  Following \cite{MP,BKP}, we define a {\it $K$-marking} on a fatgraph $G$ to be     a map
$\mu:\tilde E(G)\to K$ such that \vskip .2cm \leftskip
.8cm\rightskip .8cm

\noindent [Inversion]  for any $e\in \tilde E(G)$, we have $\mu(\bar
e)= \mu (e)^{-1}$;

\vskip .2cm

\noindent [Coherence] if $e_1,\ldots ,e_k$ are the half-edges
incident on a common $k$-valent vertex of $G$ in a linear order
consistent with the given cyclic order, then  $\mu(e_1)\cdots
\mu(e_k)=1\in K$;

\vskip .2cm

\noindent [Surjectivity] the set $\{ \mu(e):e\in\tilde E(G)\}$
generates $K$.

\leftskip=0ex\rightskip=0ex

\vskip .2cm

\begin{example}\label{exampleMarking}
A fatgraph spine $i:G\to \Sigma$ of $\Sigma $ determines a
$\pi$-marking on  $G$ as follows.  Pick an oriented edge $e\in
\tilde E(G)$ of $G$, and take a small arc $\alpha_e$ transverse to
$e$ in $\Sigma$. We orient $\alpha_e$ so that  the positive tangent
vectors of  $\alpha_e$ and $e$   (in this order) form a positively
oriented basis in the tangent space of $\Sigma$. Since the
complement of $i(G)$ is a contractible set
 containing $*\in\partial \Sigma$, there are unique homotopy classes of paths from the endpoints of $\alpha_e$
  to $*$ in $\Sigma-i(G)$. These paths and $\alpha_e$ combine into   a loop in $\Sigma$ based at $*$.
 The assignment of the homotopy class of this loop to   $e $ determines a   $\pi$-marking on $G$.
\end{example}

  Each fatgraph spine $i:G\to \Sigma $ of $\Sigma$ determines a
  bijection between the $K$-markings on $\pi$ and the $K$-markings
  on $G$. This bijection carries a $K$-marking
  $\pi\xrightarrow{p} K$  on $\pi$ to the
$K$-marking   $p \mu : \tilde E(G)\to K$ on $G$, where $\mu:\tilde
E(G)\to \pi$ is the $\pi$-marking    derived from $i $ in
Example~\ref{exampleMarking}.  A $K$-marking on a fatgraph $G$ that
arises in this way from a fatgraph spine $i:G\to \Sigma $ and a
$K$-marking
  $\pi\xrightarrow{p} K$ is
said to be {\it topological  with respect to $p$}.

Flips  along edges act naturally on marked trivalent fatgraphs.
Consider a $K$-marking $\mu:\tilde E(G)\to K$ on a trivalent
fatgraph $G$ and consider a flip along the underlying unoriented
edge of an oriented non-tail edge $e\in \tilde E(G) $. Let $G'$ be
the resulting fatgraph   with   edge $e'$ corresponding to $e$.  The
marking $\mu$ induces a unique $K$-marking $\mu'$ on $G'$ equal to
$\mu $ on $\tilde E(G)-\{ e,\bar e\}= \tilde E(G')-\{ e',\bar e'\}$.
The coherence condition allows us  to compute $\mu'(e')$. In the
notation of Figure \ref{figone},
$$\mu(e')=\mu(d)\, \mu(a) = \mu(\bar c)\, \mu(\bar b) \quad {\rm {and}} \quad \mu'(\bar e')=\mu(\bar a)\,\mu (\bar
d)= \mu(b) \,\mu(c).$$ Observe that if the $K$-marking $\mu $ is
topological with respect to $\pi\xrightarrow{p} K$, then so is the
$K$-marking $\mu'$.

\begin{proposition} \cite{BKP}    Let $G$ be a fatgraph with genus $g$ and  boundary number 1.
A $\pi$-marking $\mu$ on $G$ is topological with
respect to the identity map $\id:\pi\to \pi$ if and only if $\mu (
 t)=\zeta^{-1}$ where $t\in \tilde E(G)$ is the tail  of~$G$.
  An $H$-marking $\mu$ on $G$ is topological with
respect to the abelianization map $\pi\to H$ if and only if it
respects homology intersection numbers in the sense that
$$\mu(a)\cdot \mu (b)=\begin{cases}
+1,&{  if}~a<b<\bar a<\bar b~{  up~to~cyclic~permutation};\\
-1,&{  if}~a<\bar b<\bar a< b~{  up~to~cyclic~permutation};\\
\hskip 1.7ex 0,&{ else}
\end{cases}$$
where $<$ denotes the strict order  on $\tilde E(G)$ given by
occurrence along   $\partial \Sigma(G)$ starting from the tail.
\end{proposition}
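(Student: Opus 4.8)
The plan is to prove both assertions by directly analyzing the $\pi$-marking $\mu$ of Example~\ref{exampleMarking} and comparing it with a general marking satisfying the stated conditions. First I would treat the case of $\pi$-markings. The "only if" direction is straightforward: if $\mu$ is topological with respect to $\id$, arising from a fatgraph spine $i\colon G\to\Sigma$, then by construction the loop assigned to the tail $t$ is (homotopic to) the boundary loop traversed once, so $\mu(t)=\zeta^{-1}$ by the chosen orientation convention on $\zeta$. For the "if" direction, the key observation is that the $\pi$-markings on $G$ (as a combinatorial object with fixed genus and boundary number $1$) form a torsor under a suitable symmetry group, and that the value $\mu(t)$ is a complete invariant: more concretely, I would show that the half-edge loops $\{\mu(e)\}_{e\in\tilde E(G)}$ of a topological marking generate $\pi$ and satisfy exactly the relations forced by [Inversion] and [Coherence] together with a single extra relation expressing that traversing $\partial\Sigma(G)$ gives $\mu(t)$. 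Since $\pi$ is free of rank $2g$, one checks (counting: $G$ has first Betti number $2g$, so $\tilde E(G)$ modulo the coherence/inversion relations has rank $2g$) that such a marking is determined up to an automorphism of $\pi$ fixing $\zeta$, i.e.\ up to the action of $MC(\Sigma)$, which by Theorem~\ref{thmone} acts transitively on fatgraph spines with underlying fatgraph $G$; hence the abstract marking is realized topologically. This reduces the whole statement to the cited result of \cite{BKP}, which I would invoke for the bookkeeping.

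For the $H$-marking statement, I would proceed similarly but now everything linearizes. Given a fatgraph spine $i\colon G\to\Sigma$, the composite $a\circ\mu\colon\tilde E(G)\to H$ sends each oriented edge $e$ to the homology class of the transverse loop $\alpha_e$. The homology intersection number $[\alpha_a]\cdot[\alpha_b]$ of two such loops can be computed purely combinatorially: two transversals $\alpha_a$ and $\alpha_b$ intersect (after pushing them to the canonical representatives in $\Sigma - i(G)$ running to $*$) precisely according to how the endpoints of $a$ and $b$ interleave along $\partial\Sigma(G)$, and the sign is dictated by the orientation convention relating the positive tangent of $\alpha_e$ to that of $e$. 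Working this out gives exactly the three cases in the displayed formula, keyed to the cyclic order of $a,\bar a,b,\bar b$ in the strict order on $\tilde E(G)$. This proves the "only if" direction.

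For the "if" direction in the $H$-case, I would again argue by a torsor/transitivity principle: the $H$-markings on $G$ satisfying [Inversion] and [Coherence] form an affine space, the subset respecting the intersection-number formula is a single orbit under the group of symmetries of the combinatorial data compatible with $Sp(H)$, and the topological ones form a nonempty such orbit (nonempty by the "only if" direction applied to any genuine spine with underlying fatgraph $G$, which exists since $G$ has genus $g$ and boundary number $1$). By Fact~\ref{fact2} the action of $MC(\Sigma)$ on topological $H$-markings surjects onto $Sp(H)$, and this is enough transitivity to conclude that every $H$-marking respecting intersection numbers is topological. Again the detailed count matches \cite{BKP} and I would cite it.

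The main obstacle I anticipate is the sign bookkeeping in the $H$-marking computation: getting the orientation conventions consistent between (i) the counter-clockwise cyclic order at vertices, (ii) the induced orientation of $\partial\Sigma(G)$ and the strict order on $\tilde E(G)$, (iii) the chosen orientation of each transversal $\alpha_e$ relative to $e$, and (iv) the sign of the homology intersection pairing $\cdot$. A single sign error here flips the roles of the $+1$ and $-1$ cases. The cleanest way to control this is to verify the formula on a standard model — take $G$ to be the standard "wedge of $2g$ loops" fatgraph realizing $\Sigma_{g,1}$, with its symplectic basis $\{a_i,b_i\}$ of $H$, compute one intersection $[\alpha_{a_i}]\cdot[\alpha_{b_i}]$ by hand, and check it is $+1$ with the conventions above; the general case then follows by the combinatorial interleaving description plus the fact that flips preserve both sides of the equation (which in turn follows from the explicit flip formula for $\mu'$ displayed just before the proposition and bilinearity of $\cdot$).
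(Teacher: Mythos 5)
Your argument is essentially the paper's: its proof consists of two sentences, deriving the first claim from the Dehn--Nielsen theorem (Fact~\ref{fact1}) together with the Hopfian property of $\pi$, and the second from the surjectivity of $MC(\Sigma)\to Sp(H)$ (Fact~\ref{fact2}), which is exactly the realizability/transitivity skeleton you describe. The only point to make explicit is that your counting step (a marking with $\mu(t)=\zeta^{-1}$ differs from a topological one by an automorphism of $\pi$ fixing $\zeta$) is precisely where the Hopfian property of the free group $\pi$ enters, upgrading the surjective endomorphism determined by the two markings to an automorphism.
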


\begin{proof}
The first claim follows  from Fact \ref{fact1} and the Hopfian
property of   $\pi$. The second claim follows   from the last
assertion of Fact \ref{fact2}.
\end{proof}

\begin{question}
For $k\geq 2$, when is an $N_k$-marking on a fatgraph  topological?
\end{question}

\section{Marked fatgraph complexes and cocycles}

Fix a group $K$  and a $K$-marking $p:\pi=\pi_1(\Sigma, \ast)\to K$.
Insofar as   $MC(\Sigma)$
 acts on the Teichm\"uller space $T(\Sigma)$  without fixed points, so   does its subgroup $MC_\nabla(\Sigma;p)$.
We define the {\it $p$-relative moduli space}
$$
M_\nabla(\Sigma;p)=T(\Sigma)\slash MC_\nabla(\Sigma;p).\\
$$
This is an Eilenberg-MacLane space of type $K(MC_\nabla(\Sigma;p),
1)$. The ideal simplicial decomposition of $T(\Sigma)$ is likewise
invariant under this (or any) subgroup of $MC(\Sigma)$, and hence
descends to the $p$-relative moduli space. The dual of this
decomposition is the
 CW-complex
$$
\hat M_\nabla(\Sigma;p)=\hat{\mathcal G}(\Sigma)\slash MC_\nabla(\Sigma;p).\\
$$
Its fundamental path groupoid is denoted by
$M\Gamma_\nabla(\Sigma;p)$ and called the  {\it $p$-relative mapping
class groupoid}. It contains $MC_\nabla(\Sigma;p)$ as the stabilizer
of any object.

\begin{example} For the projection $a_k:\pi\to N_k$, the space $M_\nabla(\Sigma;a_k)=T(\Sigma)\slash MC_k(\Sigma)$ is
the $k$-th Torelli space of $\Sigma$. The groupoid
$M\Gamma_\nabla(\Sigma;a_k)$  contains
 $MC_k(\Sigma)$ as a subgroup of infinite index, namely,   the stabilizer of any
 object.
\end{example}

\begin{corollary} Let $\pi\xrightarrow{p} K$ be a $K$-marking on $\pi$.
 An object of
$M\Gamma_\nabla(\Sigma;p)$ is given by a trivalent fatgraph   with
tail and  a  $K$-marking on this fatgraph topological with respect
to $p$. Morphisms are given by finite sequences of flips acting on
$K$-marked fatgraphs with tail subject to the Involutivity,
Commutativity, and Pentagon Relations.
\end{corollary}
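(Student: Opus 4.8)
The plan is to transport the description of $M\Gamma(\Sigma)$ recorded after Corollary~\ref{corone} across the bijection, noted after Example~\ref{exampleMarking}, between $K$-markings on $\pi$ and $K$-markings on a fatgraph spine. By definition $M\Gamma_\nabla(\Sigma;p)$ is the fundamental path groupoid of $\hat M_\nabla(\Sigma;p)=\hat{\mathcal G}(\Sigma)/MC_\nabla(\Sigma;p)$; hence, exactly as in the case of $MC(\Sigma)$, its objects are the $MC_\nabla(\Sigma;p)$-orbits of trivalent fatgraph spines of $\Sigma$, its morphisms are generated by the $MC_\nabla(\Sigma;p)$-orbits of flips, and the defining relations are the $MC_\nabla(\Sigma;p)$-orbits of the boundary relations of the non-degenerate $2$-cells of $\hat{\mathcal G}(\Sigma)$. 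Those $2$-cells and the relations they impose live in $\hat{\mathcal G}(\Sigma)$ itself and do not depend on which subgroup of $MC(\Sigma)$ one quotients by, so by Corollary~\ref{corone} the relations are again Involutivity, Commutativity and the Pentagon Relation. Since flips act on $K$-marked trivalent fatgraphs by the formulas for $\mu'(e')$ established just before this corollary, and preserve the property of being topological with respect to $p$, the whole statement reduces to one point: the $MC_\nabla(\Sigma;p)$-orbits of trivalent fatgraph spines are in bijection with the isomorphism classes of pairs consisting of a trivalent fatgraph $G$ with tail and a $K$-marking on $G$ topological with respect to $p$.

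To set up that bijection I would send the orbit of a spine $i\colon G\to\Sigma$ to the pair $(G,\,p\mu_i)$, where $\mu_i\colon\tilde E(G)\to\pi$ is the $\pi$-marking of Example~\ref{exampleMarking}; this $K$-marking is topological with respect to $p$ by the very definition of that notion. Well-definedness on orbits comes from the equivariance identity $\mu_{\varphi\circ i}=\varphi_\ast\circ\mu_i$ for $\varphi\in MC(\Sigma)$ — which I would verify directly from the construction in Example~\ref{exampleMarking}, using that $\varphi$ is orientation-preserving and fixes $\ast$ — together with the commutativity of the $\nabla$-diagram for $\varphi\in MC_\nabla(\Sigma;p)$, which gives $p\circ\mu_{\varphi\circ i}=p\circ\varphi_\ast\circ\mu_i=p\circ\mu_i$ (and $\varphi\circ i$ has the same underlying fatgraph $G$). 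Surjectivity of this map is immediate from the definition of ``topological with respect to $p$''.

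Injectivity is the step I expect to be the main obstacle. Given spines $i\colon G\to\Sigma$ and $i'\colon G'\to\Sigma$ with $(G,p\mu_i)\cong(G',p\mu_{i'})$ via a marking-preserving fatgraph isomorphism $f\colon G\to G'$, replacing $i'$ by $i'\circ f$ reduces matters to $G'=G$ and $p\mu_i=p\mu_{i'}$. Two trivalent fatgraph spines of $\Sigma$ with the same underlying fatgraph are related by an element of $MC(\Sigma)$ (the transitivity of the $MC(\Sigma)$-action on spines of a fixed combinatorial type, a standard consequence of Theorem~\ref{thmone}); choose such a $\varphi$ with $\varphi\circ i$ isotopic rel $\partial\Sigma$ to $i'$. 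Then $\varphi_\ast\circ\mu_i=\mu_{i'}$, so $p\circ\varphi_\ast\circ\mu_i=p\circ\mu_{i'}=p\circ\mu_i$; since the values $\mu_i(e)$, $e\in\tilde E(G)$, generate $\pi$ — this is the Surjectivity axiom satisfied by the $\pi$-marking $\mu_i$ — the homomorphisms $p\circ\varphi_\ast$ and $p$ of $\pi$ coincide, i.e.\ the $\nabla$-diagram for $\varphi$ commutes. Thus $\varphi\in MC_\nabla(\Sigma;p)$ and $i$, $i'$ lie in the same orbit, proving injectivity and hence the object bijection; combined with the first paragraph this proves the corollary. The delicate bookkeeping — keeping fatgraph isomorphisms distinct from mapping classes in the transitivity statement, and pinning down the equivariance $\mu_{\varphi\circ i}=\varphi_\ast\circ\mu_i$ with the correct variance — is where care is needed; the rest is formal.
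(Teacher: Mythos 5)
Your proposal is correct and follows essentially the same route as the paper, whose proof is a one-line appeal to Corollary~\ref{corone} together with the identification of $MC_\nabla(\Sigma;p)$-orbits of spines with topologically $K$-marked fatgraphs; you have simply unpacked that identification (equivariance of the $\pi$-marking of Example~\ref{exampleMarking}, well-definedness on orbits, and injectivity via the Surjectivity axiom forcing $p\circ\varphi_\ast=p$) in full detail. The extra care you flag about transitivity and variance is exactly what the paper's phrase ``general position'' is eliding, so nothing is missing or different in substance.
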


\begin{proof}
This follows from Corollary \ref{corone} and general position since
finite sequences of flips relate any two topological $\pi$-markings
on a fixed fatgraph spine with tail.
\end{proof}

Thus, an element of $MC_\nabla(\Sigma;p)$ is determined by a
sequence of flips starting from a fatgraph $G$ with tail and a
topological $K$-marking $\mu:\tilde E(G)\to K$; the sequence ends
with a combinatorially isomorphic fatgraph $G'$ with tail and a
(necessarily topological) $K$-marking $\mu':\tilde E(G')\to K$
induced from $\mu$ by the   flips   so that the bijection $\tilde
E(G) \approx \tilde E(G')$ induced by the combinatorial isomorphism
$G\approx G'$ carries $\mu$ to $\mu'$.

  Suppose now that the group $K$ is abelian.  Adopting
the notation of Figure \ref{figone} for the edges involved in a
flip, we define combinatorial (untwisted) cochains $$m^p\in
C^1(\hat{\mathcal{G}}(\Sigma); K), \quad j^p \in
C^1(\hat{\mathcal{G}}(\Sigma); \Lambda^3 K), \quad s^p \in
C^1(\hat{\mathcal{G}}(\Sigma); K)$$ by
$$
 m^p(W) =p(a)+p(c) \in K,$$
 $$
j^p(W)  = p(a)\wedge p(b) \wedge p(c) \in \Lambda^3 K,$$
$$s^p(W) =$$
$$\biggl (\bigl [p(a)\wedge p(c)\bigr ]\otimes\bigl [p(b)\wedge
p(d)\bigr ]\biggr )+ \biggl (\bigl [p(b)\wedge p(d)\bigr
]\otimes\bigl [p(a)\wedge p(c)\bigr ]\biggr )   \in S^2 \Lambda^2 K.
$$
 Note that the obvious action of the group $MC_{\Box}(\Sigma;p)$
  on $K$  induces an action of $MC_{\Box}(\Sigma;p)$
 on   $\Lambda^3 K$ and $S^2 \Lambda^2 K$. We say that a 1-cochain $c$ on $\hat{\mathcal{G}}(\Sigma)$ with values in one of these groups is
 $MC_{\Box}(\Sigma;p)$-equivariant if $c(\varphi(\sigma))=\varphi(c(\sigma))$ for any oriented 1-cell
$\sigma$ of $\hat{\mathcal{G}}(\Sigma)$ and $\varphi \in
MC_{\Box}(\Sigma;p)$.

\begin{theorem}\label{thm:cocycle}
The  cochains $m^p$, $j^p$, and $s^p$  are
$MC_{\Box}(\Sigma;p)$-equivariant cocycles on
$\hat{\mathcal{G}}(\Sigma)$.  In particular, they descend  to
1-cocycles on the cell complex $\hat M_\nabla(\Sigma;p)$ with the
corresponding coefficients.
\end{theorem}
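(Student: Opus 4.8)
The plan is to verify the two required properties separately: first that each of $m^p$, $j^p$, $s^p$ is a cocycle, i.e.\ vanishes on the boundary of every non-degenerate $2$-cell of $\hat{\mathcal{G}}(\Sigma)$, and second that each is $MC_\Box(\Sigma;p)$-equivariant. Once both are established, the final sentence is automatic: an $MC_\Box(\Sigma;p)$-equivariant cocycle is in particular $MC_\nabla(\Sigma;p)$-equivariant, so it descends to a $1$-cocycle on the quotient complex $\hat M_\nabla(\Sigma;p)=\hat{\mathcal G}(\Sigma)/MC_\nabla(\Sigma;p)$.

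For the cocycle condition, recall from the discussion after Theorem~\ref{thmone} that the non-degenerate $2$-cells of $\hat{\mathcal G}(\Sigma)$ come in two types, corresponding to the Commutativity and Pentagon relations of Corollary~\ref{corone} (the Involutivity relation only identifies a $1$-cell with its reverse, under which each cochain already changes sign by the [Inversion] axiom and the flip formulas $\mu'(e')$, so it is harmless). So I would check:
\begin{itemize}
\item \emph{Commutativity cells:} two flips along disjoint edges. Since the edges are disjoint, the four edges $a,b,c,d$ around each flip are untouched by the other flip, so the contributions of the two edge-paths to $m^p$, $j^p$, $s^p$ cancel in pairs around the square. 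This is essentially immediate.
\item \emph{Pentagon cells:} here the real computation lies. I would use the explicit flip formulas
$$\mu(e')=\mu(d)\,\mu(a)=\mu(\bar c)\,\mu(\bar b),\qquad \mu'(\bar e')=\mu(\bar a)\,\mu(\bar d)=\mu(b)\,\mu(c)$$
(applied with $p$, so additively: $p(e')=p(d)+p(a)$, etc.) to track how the four ``local'' edge-labels at each of the five flips evolve around the pentagon, and then sum the five contributions. For $m^p$ one gets a telescoping sum of elements of $K$; for $j^p$ one gets a sum of five wedge-cubes in $\Lambda^3K$, and the identity $\sum=0$ should reduce, after substituting the flip formulas, to the vanishing of a specific alternating expression—this is exactly the computation already carried out in \cite{MP} for the classical case $K=H$, $p=$ abelianization, and the same algebra works verbatim for any abelian $K$ since only the additive/wedge structure is used. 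For $s^p$ the analogous identity in $S^2\Lambda^2K$ is the content of the Earle-class computation; again I would follow the bookkeeping of \cite{MP}. I expect the pentagon check for $s^p$ to be the main obstacle, simply because of its size: one must label the five intermediate fatgraphs consistently, read off all the relevant $p$-values, and confirm the sum of five terms in $S^2\Lambda^2K$ vanishes.
\end{itemize}

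For equivariance, let $\varphi\in MC_\Box(\Sigma;p)$ and let $\sigma$ be an oriented $1$-cell (a flip $G\rightsquigarrow G'$ with edges $a,b,c,d$ as in Figure~\ref{figone}). Then $\varphi(\sigma)$ is the flip $\varphi(G)\rightsquigarrow\varphi(G')$; by naturality of the construction in Example~\ref{exampleMarking} (general position of transverse arcs is preserved under diffeomorphism), the $\pi$-marking of $\varphi(G)$ on the edge $\varphi(a)$ is $\varphi_\ast$ applied to the $\pi$-marking of $G$ on $a$, and composing with $p$ and using the defining square $p\circ\varphi_\ast=\tilde\varphi\circ p$ gives that the $K$-label of $\varphi(a)$ equals $\tilde\varphi$ of the $K$-label of $a$. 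Hence $m^p(\varphi(\sigma))=\tilde\varphi(p(a))+\tilde\varphi(p(c))=\tilde\varphi(m^p(\sigma))$, and similarly $j^p(\varphi(\sigma))=\tilde\varphi(p(a))\wedge\tilde\varphi(p(b))\wedge\tilde\varphi(p(c))$, which is precisely the action of $\varphi$ on $\Lambda^3K$ applied to $j^p(\sigma)$; the same for $s^p$ in $S^2\Lambda^2K$. For $\varphi\in MC_\nabla(\Sigma;p)$ one has $\tilde\varphi=\id_K$, recovering ordinary invariance. Combining the two halves: each cochain is an $MC_\Box(\Sigma;p)$-equivariant $1$-cocycle on $\hat{\mathcal G}(\Sigma)$, so it is $MC_\nabla(\Sigma;p)$-invariant, hence descends to $\hat M_\nabla(\Sigma;p)$, as claimed. $\qed$
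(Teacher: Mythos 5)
Your plan matches the paper's proof essentially step for step: one checks well-definedness and Involutivity by direct computation, notes that Commutativity is immediate since the coefficient groups are abelian, verifies the Pentagon relation using the Coherence relations (the paper carries out the $m^p$ and $s^p$ sums explicitly and cites \cite{MP} for $j^p$), and observes that equivariance is clear. The only differences are cosmetic: you defer the pentagon computations for $m^p$ and $s^p$ rather than executing them, and your passing attribution of the $s^p$ pentagon identity to the ``Earle-class computation'' is a misnomer (the Earle class concerns $m^a$, while $s^a$ comes from \cite{Penbook}); neither affects the correctness of the approach.
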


\begin{proof}
One first checks that the cochains are well-defined, i.e., are
independent of the orientation on the edge along which the flip $W$
is performed.  The Commutativity Relation leads to vanishing
expressions since the groups $K$, $\Lambda ^2 K$, and $S^2\Lambda ^2
K$ are abelian.  Vanishing of the expressions for the Involutivity
is easily proved by direct computations.

\begin{figure}[!h]
\begin{center}
\epsffile{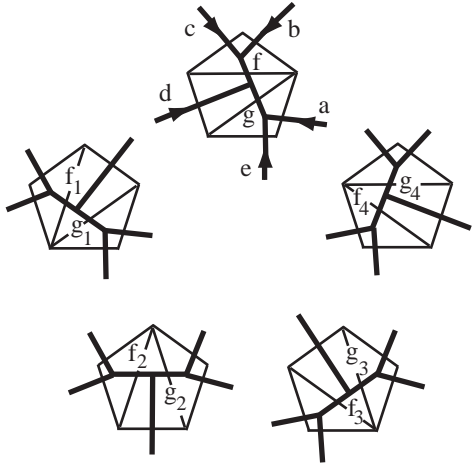}
\caption{Notation for pentagon relation}
\label{fig:pent}
\end{center}
\end{figure}

In the notation of  Figure \ref{fig:pent}   reproduced from
\cite{MP},  we shall check the Pentagon Relation
$W_fW_{g_1}W_{f_2}W_{g_3}W_{f_4}=1$ in each case. For simplicity, we
shall write $a$ instead of $p(a)$, $b$ instead of $p(b)$, etc.

Pentagon Relation for $m=m^p$. We have $m(W_f)=b+d$, $m(W_{g_1})=b+e$,
$m(W_{f_2})=c+e$, $m(W_{g_3})=a+c$, $m(W_{f_4})=a+d$. Therefore,
$$m(W_f)+m(W_{g_1})+m(W_{f_2})+m(W_{g_3})+m(W_{f_4})$$
$$= 2(a+b+c+d+e)=0,$$ as desired, where the last equality follows from
Coherence.

Pentagon Relation for $j=j^p$: see Theorem 3.2  of  \cite{MP}.

Pentagon Relation for $s=s^p$.  By definition,
\begin{align*}
s(W_f) &= [b\wedge d] \otimes [c\wedge g]+[c\wedge g]\otimes [b\wedge d], \\
s(W_{g_1}) &= [e\wedge b]\otimes [a\wedge f_1]+[a\wedge f_1]\otimes [e\wedge b], \\
s(W_{f_2}) &= [e\wedge c]\otimes [g_2\wedge d]+[g_2\wedge d]\otimes [e\wedge c], \\
s(W_{g_3}) &= [a\wedge c]\otimes [b\wedge f_3]+[b\wedge f_3]\otimes [a\wedge c], \\
s(W_{f_4}) &= [a\wedge d]\otimes [g_4\wedge e]+[g_4\wedge e]\otimes [a\wedge d].
\end{align*}
By Coherence,   $g=a+e$, $f_1=c+d$, $g_2=a+b$, $f_3=d+e$, $g_4=b+c$.
Hence
\begin{align*}
& s(W_f)+s(W_{g_1})+s(W_{f_2})+s(W_{g_3})+s(W_{f_4}) \\
= &   [b\wedge d]\otimes [c\wedge (a+e)]+[c\wedge (a+e)]\otimes [b\wedge d] \\
+& [e\wedge b]\otimes [a\wedge (c+d)]+[a\wedge (c+d)]\otimes [e\wedge b] \\
+& [e\wedge c]\otimes [(a+b)\wedge d]+[(a+b)\wedge d]\otimes [e\wedge c] \\
+& [a\wedge c]\otimes [b\wedge (d+e)]+[b\wedge (d+e)]\otimes [a\wedge c] \\
+& [a\wedge d]\otimes [(b+c)\wedge e]+[(b+c)\wedge e]\otimes [a\wedge d].
\end{align*}
Direct cancellation using the defining properties of $\wedge$ and
$\otimes$ shows that the latter sum vanishes.    The
$MC_{\Box}(\Sigma;p)$-equivariance of the cocycles is clear.
\end{proof}

As a consequence,  the  cocycles $m^p$, $j^p$, and $s^p$
  give rise to twisted first cohomology classes of
$MC_{\Box}(\Sigma;p)$ and untwisted cohomology classes of
$MC_{\nabla}(\Sigma;p)$.   Indeed, fix    a vertex $v_0$ of the
complex $\hat{\mathcal{G}}(\Sigma)$, i.e., a fatgraph spine of
$\Sigma$. For $\varphi \in MC_{\Box}(\Sigma;p)$, choose a chain
$\sigma_1,\ldots,\sigma_n$ of oriented 1-cells connecting $v_0$ to
$\varphi(v_0)$ and set
$$\tilde{m}^p (\varphi)=\sum_{i=1}^n m^p(\sigma_i).$$
 The fact that $m^p$ is a cocycle ensures that $\tilde{m}^p
(\varphi)$ is well-defined. The $MC_{\Box}(\Sigma;p)$-equivariance
of $m^p$ implies the following cocycle condition for $\tilde{m}^p$:
for any $\varphi, \psi \in MC_{\Box}(\Sigma;p)$, we have
$$\tilde{m}^p(\varphi \psi)=\tilde{m}^p(\varphi)+\varphi \cdot \tilde{m}^p(\psi) $$
(this construction of a twisted 1-cocycle appeared in \cite{MP}, \S
3). The cocycle  $\tilde{m}^p$ depends on the choice of   $v_0$  but
its cohomology classes  does not depend on this choice. We obtain
thus a twisted cohomology class  $[\tilde{m}^p]\in
H^1(MC_{\Box}(\Sigma;p); K)$. The cocycles $j^p$  and $s^p$
similarly give  rise to   twisted cohomology classes
  $$[\tilde{s}^p]\in
H^1(MC_{\Box}(\Sigma;p); \Lambda^3 K)\quad {\rm {and}} \quad [\tilde{j}^p]\in
H^1(MC_{\Box}(\Sigma;p); S^2 \Lambda^2K).$$ These three cohomology
classes restrict to respective homomorphisms
$$  MC_\nabla(\Sigma;p)\to K, \quad MC_\nabla(\Sigma;p)\to
\Lambda ^3 K, \quad   MC_\nabla(\Sigma;p)\to S^2\Lambda^2 K.$$

The cocycle $m^p$ is new.  When $p=a:\pi\to H$ is the abelianization
map, the cocycle $j^p=j^a$ was   discovered in \cite{MP}. It is
proven in \cite{MP,BKP} that
 $j^a$ represents six times the
first Johnson homomorphism~$\tau _1$. The cocycle $s^a$ was
discovered in \cite{Penbook}. One can   show that the cohomology
class of $s^a$ vanishes. In \S \ref{sec:m}, we show that
$[\tilde{m}^a]\in H^1(MC(\Sigma);H)\approx{\mathbb Z}$ is $-2$ times
a generator called  the Earle class.

\begin{question}
Are there abelian markings $p$ so that $s^p $ is   non-trivial?
\end{question}

\begin{question}
What the are functorial properties in $p$ of   $  m^p,s^p, j^p$?
\end{question}

\begin{question}
For which $p$ is $j^p$ non-trivial?  Mimicking a  computation in
\cite{MP}, one can compute the value of $j^p$ on a BP torus map  and
find this value to be non-vanishing if a local condition holds
thereby addressing this question provided the BP map lies in
$MC_\nabla(\Sigma;p)$.
\end{question}

\section{Relative Torelli-Johnson-Morita Theory}

Fix a group $K$  and a $K$-marking $\pi\xrightarrow{p}K$ on $\pi$.
The group  $K$ has its own lower central series
$\Gamma_{k+1}(K)=[K,\Gamma_k(K)]$, where $\Gamma_0(K)=K$, and the
nilpotent quotients $N_k(K)=K/\Gamma_k(K)$. For all $k\geq 0$, we
have a short exact sequence
$$0\to{\mathcal L}_{k+1}(K) \to N_{k+1}(K)\to N_k(K)\to 1,$$
where ${\mathcal L}_{k+1}(K)=\Gamma_k(K)/\Gamma_{k+1}(K)$.

Composing $\pi\xrightarrow{p}K$ with the projection $K\to N_k(K)$,
we obtain  a     $N_k(K)$-marking $\pi\xrightarrow{p_k}N_k(K)$  for
all $k\geq 0$.  The group  $MC_\nabla(\Sigma; p_k)$ plays the
$p$-relative role of the $k$-th Torelli group.

\begin{example} For  $p=id:\pi\to\pi$, we have
$MC_\nabla(\Sigma;p_k)=MC_\nabla(\Sigma;a_k)$ where $a_k:\pi\to N_k$
is the canonical projection.
\end{example}

Fix $k\geq 1$. If  $\varphi\in MC_\nabla(\Sigma;p_k)$ and $x\in\pi$,
then $p_{k+1}(\varphi(x) x^{-1})\in N_{k+1}(K)$   projects to $1\in
N_k(K)$ and determines an element $y_x$ of ${\mathcal L}_{k+1}(K)$.
In generalization of the classical case, the formula
$\varphi\mapsto(x\mapsto y_x)$ defines the {\it $p$-relative $k$-th
Johnson map}
$$\tau_k^p:MC_\nabla(\Sigma; p_k)\to~{\rm Hom}(H,{\mathcal L}_{k+1}(K))
\approx H\otimes {\mathcal L}_{k+1}(K).$$
  In general,   $\tau^p_k$
does  not seem to be a homomorphism. We have the following partial
result in this direction.

\begin{proposition}
For all $k\geq 1$, the restriction of $\tau_k^p$ to the  group
$$M_k =MC_\nabla(\Sigma;
p_k) \cap MC_\Box(\Sigma; p_{k+1})\subset MC_\nabla(\Sigma;
p_k) $$ is a homomorphism.
\end{proposition}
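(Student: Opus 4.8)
The plan is to follow the template of the proof of Claim~\ref{cl2}, the one new ingredient being a lemma describing how elements of $M_k$ act on $\mathcal{L}_{k+1}(K)$. Note first that $M_k$ is a group (an intersection of two subgroups of $\Aut(\pi)$) contained in $MC_\nabla(\Sigma;p_k)$, so the statement to prove is that $\tau_k^p(\varphi\psi)=\tau_k^p(\varphi)+\tau_k^p(\psi)$ in the abelian group $\mathrm{Hom}(H,\mathcal{L}_{k+1}(K))$ for all $\varphi,\psi\in M_k$.

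First I would fix $\varphi,\psi\in M_k$ and $x\in\pi$ and, exactly as in the definition of the Johnson map, write
$$(\varphi\circ\psi)(x)\,x^{-1}=\varphi\bigl(\psi(x)\,x^{-1}\bigr)\cdot\bigl(\varphi(x)\,x^{-1}\bigr).$$
Pushing this into $N_{k+1}(K)$ by $p_{k+1}$ and identifying $\mathcal{L}_{k+1}(K)$ with its image there: the last factor is $\tau_k^p(\varphi)([x])$; since $\psi\in MC_\nabla(\Sigma;p_k)$ gives $p_k(\psi(x)x^{-1})=1$, the element $p_{k+1}(\psi(x)x^{-1})$ lies in $\mathcal{L}_{k+1}(K)$ and equals $\tau_k^p(\psi)([x])$; and since $\varphi\in MC_\Box(\Sigma;p_{k+1})$ there is $\tilde\varphi\in\Aut(N_{k+1}(K))$ with $p_{k+1}\circ\varphi=\tilde\varphi\circ p_{k+1}$, so $p_{k+1}\bigl(\varphi(\psi(x)x^{-1})\bigr)=\tilde\varphi\bigl(\tau_k^p(\psi)([x])\bigr)$. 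Hence $\tau_k^p(\varphi\psi)([x])=\tilde\varphi\bigl(\tau_k^p(\psi)([x])\bigr)\cdot\tau_k^p(\varphi)([x])$ in $N_{k+1}(K)$, and everything comes down to showing that $\tilde\varphi$ restricts to the identity on $\mathcal{L}_{k+1}(K)$.

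This lemma is the only nontrivial point, and it is where the hypotheses $\varphi\in MC_\nabla(\Sigma;p_k)$ and $\varphi\in MC_\Box(\Sigma;p_{k+1})$ are both used. Since $p_k$ is the composite of $p_{k+1}$ with the projection $q\colon N_{k+1}(K)\to N_k(K)$, whose kernel is $\mathcal{L}_{k+1}(K)$, and $p_{k+1}$ is onto, the relation $p_k\circ\varphi=p_k$ forces $q\circ\tilde\varphi=q$; thus $\tilde\varphi(z)\,z^{-1}\in\mathcal{L}_{k+1}(K)$ for every $z\in N_{k+1}(K)$. Now $\mathcal{L}_{k+1}(K)=\Gamma_k(N_{k+1}(K))$ is central in $N_{k+1}(K)$ (because $[\Gamma_k,N_{k+1}(K)]=\Gamma_{k+1}(N_{k+1}(K))=1$) and is generated by the images of $(k{+}1)$-fold iterated commutators of elements of $N_{k+1}(K)$; such a commutator is unchanged when each of its entries is multiplied by a central element, so $\tilde\varphi$ fixes all these generators and hence fixes $\mathcal{L}_{k+1}(K)$ pointwise. (Equivalently: the automorphism of the associated graded Lie ring induced by $\tilde\varphi$ is determined by its degree-one part, which is trivial since $\tilde\varphi$ is trivial on $N_k(K)$, hence on the further quotient $N_1(K)=\mathcal{L}_1(K)$.)

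With the lemma, the displayed identity becomes $\tau_k^p(\varphi\psi)([x])=\tau_k^p(\psi)([x])\cdot\tau_k^p(\varphi)([x])$; writing $\mathcal{L}_{k+1}(K)$ additively and letting $x$ vary over $\pi$ (the values depend only on $[x]\in H$), we obtain $\tau_k^p(\varphi\psi)=\tau_k^p(\varphi)+\tau_k^p(\psi)$ in the abelian group $\mathrm{Hom}(H,\mathcal{L}_{k+1}(K))$, as required. I would finish by remarking that for a general $\varphi\in MC_\nabla(\Sigma;p_k)$ the automorphism $\tilde\varphi$ need not exist, which is exactly why $\tau_k^p$ cannot be expected to be a homomorphism on the whole of $MC_\nabla(\Sigma;p_k)$.
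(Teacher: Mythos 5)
Your proposal is correct and follows essentially the same route as the paper: the same decomposition $(\varphi\circ\psi)(x)x^{-1}=\varphi(\psi(x)x^{-1})\cdot(\varphi(x)x^{-1})$, the same use of $\varphi\in MC_\Box(\Sigma;p_{k+1})$ to produce $\tilde\varphi$ with $p_{k+1}\circ\varphi=\tilde\varphi\circ p_{k+1}$, and the same key point that $\tilde\varphi$ fixes $\mathcal{L}_{k+1}(K)$ pointwise because it is trivial in degree one. You merely spell out the last step in more detail (via central perturbations of iterated commutators) than the paper, which simply invokes triviality on $H_1(K)\cong H_1(N_{k+1}(K))$.
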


\begin{proof} For $\varphi,\psi \in M_k$ and $x\in \pi$ we compute
\begin{align*}
p_{k+1}((\varphi \circ \psi)(x)x^{-1})
 &= p_{k+1}(\varphi(\psi(x)x^{-1})\varphi(x)x^{-1}) \\
 &=\tilde \varphi(p_{k+1}(\psi(x)x^{-1}))\, p_{k+1}(\varphi(x)x^{-1}) \\
 &=p_{k+1}(\psi(x)x^{-1}) \, p_{k+1}(\varphi(x)x^{-1}) \\
 &=(\tau_k^p(\varphi)+\tau_k^p(\psi))(x).
\end{align*}
In this computation: $\tilde \varphi$ is the automorphism of
$N_{k+1}(K)$ induced by $\varphi$; the second equality uses the
inclusion $\varphi \in MC_\Box(\Sigma;p_{k+1})$; and the third
equality follows from the fact that $\varphi  \in M_k$ acts
trivially on $H_1(K)  \cong H_1(N_{k+1}(K)) $ and therefore acts
trivially on $ \mathcal{L}_{k+1}(N_{k+1}(K))   \cong
\mathcal{L}_{k+1}(K )$ $=\Gamma_k(K)/\Gamma_{k+1}(K)$.
\end{proof}

We expect a version of Claim \ref{cl3} to   hold   for the map
$$
\bar \tau_k^p: MC_\nabla(\Sigma; p_k)\to~N_1(K)\otimes{\mathcal
L}_{k+1}(K)$$ obtained by  composing $\tau^p_k$ with the tensor
product of   $p_\#:H\to N_1(K)$ and the identity on
${\mathcal L}_{k+1}(K)$. We however do not pursue this line of thought here.

\begin{question}  What is the connection between the homomorphism
 $ MC_\nabla(\Sigma; p_1)\to~
\Lambda^3 N_1(K)$  produced by Theorem \ref{thm:cocycle} and the
morphism $$\bar \tau_1^p:MC_\nabla(\Sigma; p_1)\to~ N_1(K)\otimes
{\mathcal L}_{2}(K)\approx N_1(K)\otimes\Lambda^2 N_1(K)?$$
\end{question}

\begin{remark}
The paper \cite{BKP} shows that for an abelian $K$, any $K$-marking $\mu:\tilde E(G) \to
K$ on a trivalent fatgraph $G$ with tail   naturally extends  to a marking $\hat\mu:\tilde E(G) \to
\hat T(K)$ by the completed tensor algebra $\hat T(K)$ of $K$ so
that $\hat\mu(e)\equiv 1+\mu(e)$ modulo terms of degree at least
two for all $e\in \tilde E(G)$. This was done in order to obtain explicit formulae for
lifts of the higher Johnson homomorphisms to the classical Torelli
groupoid using Kawazumi's   description \cite{Kaw1} of these homomorphisms in
terms of generalized Magnus expansions.  It would
be interesting to   generalize these results to the
$p$-relative case.
\end{remark}

\begin{remark} As pointed out by J{\o}rgen Ellegaard Andersen, it is intriguing and appealing
to think about $MC(\Sigma_1)$-markings on a fatgraph spine of  $ \Sigma_2$ as describing
holonomies of $\Sigma_1$-bundles over $\Sigma_2$ in order to probe 4-manifolds.
\end{remark}

\section{Evaluation of the cocycle $m^a$}
\label{sec:m}

Let $\pi \xrightarrow{a} H$ be the abelianization map. The cocycle
$m=m^a$ given in Theorem \ref{thm:cocycle} determines a twisted
cohomology class $[\tilde{m}^a] \in H^1(MC(\Sigma); H)$.   Morita
\cite{Morita89} proved that $H^1(MC(\Sigma); H) $ is an infinite
cyclic group. In this section, we show that $[\tilde{m}^a]$ is twice
a generator.

 We first review the construction by Morita \cite{Morita89}.
Let $F(\alpha,\beta)$ be the free group of rank two, generated by $\alpha$, $\beta$. Any element $x\in
F(\alpha,\beta)$ can be uniquely written as
$x=\alpha^{\varepsilon_1}\beta^{\delta_1}\cdots
\alpha^{\varepsilon_n}\beta^{\delta_n}$, where
$\varepsilon_i,\delta_i \in \{ 0,\pm 1\}$. Set
$$d(x):=\sum_{k=1}^n \varepsilon_k \sum_{\ell=k}^n \delta_{\ell}
-\sum_{k=1}^n \delta_k \sum_{\ell=k+1}^n \varepsilon_{\ell}.$$ Next,
let $\{ \alpha_i, \beta_i \}_{i=1}^g\subset \pi$ be the standard
geometrically symplectic basis of Figure \ref{fig:basis} and for
$1\le i\le g$, define a group homomorphism $p_i: \pi \to
F(\alpha,\beta)$ by $p_i(\alpha_i)=\alpha$, $p_i(\beta_i)=\beta$,
and $p_i(\alpha_j)=p_i(\beta_j)=1$ ($j\neq i$). Using the same
letter $d$, we define $d:\pi \to \mathbb{Z}$ by $d(x)=\sum_{i=1}^g
d(p_i(x))$. For $\varphi \in MC(\Sigma)$, we see that the map $\pi
\to \mathbb{Z},\ x\mapsto d( \varphi^{-1}  (x))-d(x)$ is a
homomorphism. In this way, we get a map $\tilde{f}: MC(\Sigma)\to H$
as the composite
$$MC(\Sigma) \to {\rm Hom}(\pi,\mathbb{Z})={\rm Hom}(H,\mathbb{Z})\cong H,$$
where the last isomorphism is induced by homology intersection
numbers: $H\to {\rm Hom}(H,\mathbb{Z}),\ a\mapsto (y\mapsto (a\cdot
y))$.   Moreover, $\tilde{f}$ satisfies the cocycle condition:
$\tilde{f}(\varphi \psi)=\tilde{f}(\varphi)+\varphi \cdot
\tilde{f}(\psi)$ for any $\varphi,\psi\in MC(\Sigma)$.

Morita \cite{Morita89} proved that for $g\ge 2$,   the  group
$H^1(MC(\Sigma); H)$ is an infinite cyclic group generated by the
class $k=[\tilde{f}]$. Actually, Earle \cite{Ea} already found a
cocycle representing $k$, and following \cite{Kaw2}, we call $k$
{\it the Earle class}.

\begin{theorem}\label{thm:m}
We have
$$[\tilde{m}^a]=-2k \in H^1(MC(\Sigma); H).$$
\end{theorem}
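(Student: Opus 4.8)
The plan is to show directly that the twisted cocycles $\tilde{m}^a$ and $\tilde{f}$ on $MC(\Sigma)$ differ by a coboundary, with the explicit constant $-2$. Both cocycles are built the same way: one fixes a base vertex $v_0$ of $\hat{\mathcal G}(\Sigma)$ (a fatgraph spine, which we choose to be the spine dual to the standard handle decomposition compatible with the symplectic basis $\{\alpha_i,\beta_i\}$ of Figure~\ref{fig:basis}) and integrates a local $H$-valued $1$-cochain along an edge-path from $v_0$ to $\varphi(v_0)$. For $\tilde m^a$ the local cochain on a flip $W$ is $p(a)+p(c)\in H$ in the notation of Figure~\ref{figone}; so the whole task reduces to comparing this with Morita's function $d$. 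Concretely, it suffices to exhibit a $0$-cochain, i.e.\ a function $\delta$ assigning to each trivalent fatgraph spine $G$ an element $\delta(G)\in H$ (equivariantly, $\delta(\varphi(G))=\varphi(\delta(G))$), such that for every flip $W:G\to G'$ one has $m^a(W)=\delta(G')-\delta(G)+\big(\text{local contribution of }d\big)$, where the $d$-contribution, summed along a path from $v_0$ to $\varphi(v_0)$ and paired via the intersection form, reproduces $-2\tilde f(\varphi)$. Since both $[\tilde m^a]$ and $k=[\tilde f]$ live in $H^1(MC(\Sigma);H)\cong\mathbb Z$ (for $g\ge 2$; the case $g=1$ being handled separately or being vacuous as the group is finite rank), it actually suffices to evaluate both cocycles on a single well-chosen mapping class on which $\tilde f$ is known to be a generator, and to check the two values agree up to the factor $-2$.

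First I would recall, following \cite{BKP,MP}, how a mapping class $\varphi$ is encoded by a finite sequence of flips from $G$ to a combinatorially isomorphic $G'$, and how the $\pi$-marking $\mu:\tilde E(G)\to\pi$ of Example~\ref{exampleMarking} transports along flips by the rule $\mu(e')=\mu(d)\mu(a)$. Then I would pick the standard "linear chain" trivalent spine $G_0$ of $\Sigma_{g,1}$ whose edge-markings realize $\alpha_1,\beta_1,\dots,\alpha_g,\beta_g$ together with their boundary words, so that the abelianized marking $a\circ\mu$ literally reads off homology classes. On this model the combinatorial quantity $p(a)+p(c)$ attached to a flip becomes an explicit bilinear expression in the exponent-sequences $\varepsilon_k,\delta_\ell$ appearing in Morita's formula for $d$. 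The key computation is then to verify that a single flip's contribution to $m^a$, accumulated over a loop of flips representing $\varphi$, equals (via Poincaré duality $H\cong\mathrm{Hom}(H,\mathbb Z)$) exactly $-2\big(d(\varphi^{-1}(x))-d(x)\big)$ evaluated on the basis classes $x=\alpha_i,\beta_i$; the factor $2$ should emerge from the "$+p(a)+p(c)$'' having two terms in a manner symmetric to the two sums in the definition of $d$, and the sign from the orientation convention identifying $\tilde E(G)$ with $\tilde E(G')$ across a flip.

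The cleanest route to pin the constant is to evaluate both sides on an explicit generator. Morita \cite{Morita89} shows $\tilde f$ takes the value $\pm 1$ (a generator of $H^1(MC(\Sigma);H)\cong\mathbb Z$) on a suitable bounding-pair or handle-pushing map; I would take such a map $\varphi_0$, realize it as a short explicit sequence of flips on $G_0$ (a handle push can be written with a small bounded number of flips independent of $g$), and compute $\tilde m^a(\varphi_0)$ directly from the formula $m^a(W)=p(a)+p(c)$, summing over that flip sequence. Comparing with $\tilde f(\varphi_0)$ yields the scalar $-2$, and since the cohomology group is infinite cyclic the equality $[\tilde m^a]=-2k$ follows for all $g\ge 2$ once equivariance (already established in Theorem~\ref{thm:cocycle}) guarantees the class is well defined; the genus-one case is either immediate or excluded as in Morita's statement.

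The main obstacle I anticipate is bookkeeping: matching the flip-by-flip accumulation of $p(a)+p(c)$ against Morita's double sum $d$ requires carefully tracking which oriented edges of the evolving fatgraph carry which homology classes (and their signs) as one moves through a handle push, and ensuring the orientation convention "$e,e'$ in this order agree with the orientation of $\Sigma$'' is applied consistently so that the sign comes out $-2$ rather than $+2$. A secondary subtlety is confirming that the $d$-contribution is genuinely a coboundary of an $H$-valued $0$-cochain on fatgraph spines — i.e.\ that Morita's $d:\pi\to\mathbb Z$ can be upgraded to a consistent per-spine correction term — so that the comparison is not merely numerical on one generator but conceptually transparent; but since $H^1$ is rank one, the one-generator evaluation suffices to conclude even if the full coboundary identification is left implicit.
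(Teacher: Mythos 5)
Your proposal is correct and follows essentially the same route as the paper: since $H^1(MC(\Sigma);H)$ is infinite cyclic generated by the Earle class $k=[\tilde f]$, the authors pin the constant by evaluating both $\tilde m^a$ and $\tilde f$ on a single torus BP map, computing $\tilde m^a$ via the explicit $14$-flip realization of that map from \cite{MP} (obtaining $4B_2$) and $\tilde f$ via Morita's formula for $d$ (obtaining $-2B_2$). The coboundary/comparison machinery in your first paragraph is not needed, and indeed you correctly fall back on the one-element evaluation, which is well defined on cohomology classes here because the BP map lies in the Torelli group and hence acts trivially on $H$.
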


  The minus sign in the formula comes from the difference between
the convention here and that of \cite{Morita89}, where a map $f:
MC(\Sigma)\to H$ is called a twisted 1-cocycle if
$f(\varphi\psi)=\psi^{-1}\cdot f(\varphi)+f(\psi)$. If $f$ is a
twisted cocycle in this sense, then the map $MC(\Sigma)\to H,
\varphi \mapsto f(\varphi^{-1})$ is a twisted 1-cocycle in our
sense.

To prove Theorem~\ref{thm:m},   we evaluate the two cocycles on a
{\it torus BP map}. Let $C_1$ and $C_2$ be disjoint simple closed
curves on $\Sigma$ such that their union is the boundary of a
torus-minus-two-disks embedded in $\Sigma$. The mapping class
$t_{C_1}t_{C_2}^{-1}$ is called a torus BP map (BP stands for
bounding pair), where $t_{C_i}$ is the right handed Dehn twist along
$C_i$. A torus BP map is an element of $MC_1(\Sigma)$.

\begin{figure}[!h]
\begin{center}
\epsffile{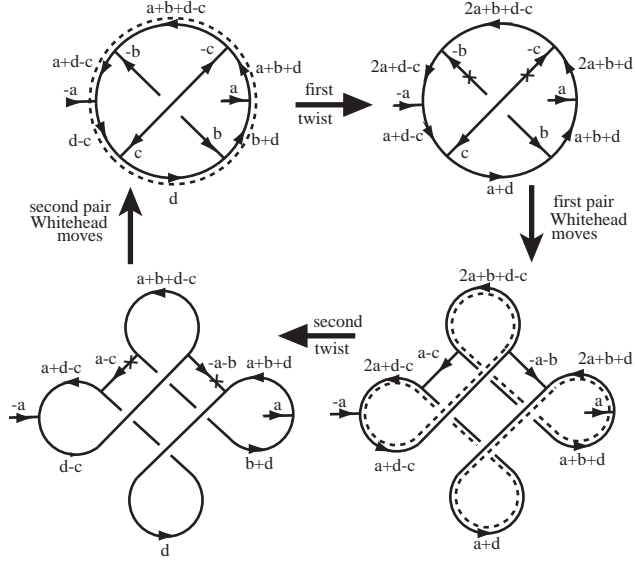}
\caption{BP torus map expressed as flips}
\label{fig:bp}
\end{center}
\end{figure}

\begin{lemma}
\label{lem:m(phi)} Let $\varphi$ be the torus BP map  illustrated in
Figure \ref{fig:bp} reproduced   from \cite{MP}.   Then
$\tilde{m}^a(\varphi)=4a$.
\end{lemma}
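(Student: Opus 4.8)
The plan is to compute $\tilde{m}^a(\varphi)$ directly from the explicit sequence of flips representing the torus BP map $\varphi$ that is displayed in Figure~\ref{fig:bp} (taken from \cite{MP}). Recall from the construction of $\tilde m^a$ that, having fixed a base fatgraph spine $v_0$, one has $\tilde m^a(\varphi)=\sum_i m^a(\sigma_i)$ where $\sigma_1,\dots,\sigma_n$ is the chain of oriented $1$-cells of $\hat{\mathcal G}(\Sigma)$ tracing out the flip sequence from $v_0$ to $\varphi(v_0)$, and where for a flip $W$ with the edge-labels $a,b,c,d$ of Figure~\ref{figone} one has $m^a(W)=[a]+[c]\in H$ (using the topological $H$-marking, i.e.\ homology classes of the transversal loops $\alpha_e$). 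So concretely: I would list the flips in Figure~\ref{fig:bp} one at a time, and for each flip record the two homology classes sitting on the ``$a$'' and ``$c$'' edges in the convention of Figure~\ref{figone}, respecting the induced identification of $\tilde E(G)$ with $\tilde E(G')$ under each flip.

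First I would set up homology coordinates: the torus-minus-two-disks carrying $C_1\cup C_2$ contributes a symplectic pair, say $x,y\in H$ with $x\cdot y=1$, while $[C_1]=[C_2]=:z$ is the separating-type class of the bounding pair, and the relevant edge markings of the fatgraph spines along the flip sequence are expressible as $\mathbb Z$-combinations of $x,y,z$ (and possibly one further class coming from how the subsurface sits in $\Sigma$); these are exactly the labels already worked out in \cite{MP} for the same picture, so I would reuse them rather than rederive them. Then I would evaluate $m^a=[a]+[c]$ on each flip in the sequence and sum. Because the BP map is a product of two opposite Dehn twists along disjoint curves, there should be a natural symmetry in the flip sequence (the second twist mirrors the first with an orientation reversal), which I expect to make most contributions cancel in pairs, leaving a small residual multiple of a single homology class.

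The identification of this residual class is the crux. Since $\varphi$ is a torus BP map, $\tilde m^a(\varphi)$ must lie in the subgroup of $H$ on which the relevant Poincar\'e-dual pairing is nondegenerate in the expected way; the target value $4a$ in the statement means the class $a$ here is precisely the homology class that is Poincar\'e-dual to the cohomology class picked out by the bounding pair, i.e.\ $a=z=[C_1]=[C_2]$ under the identifications fixed in \S\ref{sec:m}. I would therefore check that the residual sum equals $4z$ by tracking signs carefully through: (i) the orientation convention on the transversal arcs $\alpha_e$ from Example~\ref{exampleMarking}; (ii) the rule $e,e'$ in this order agreeing with the orientation of $\Sigma$ when identifying $\tilde E(G)\approx\tilde E(G')$; and (iii) the left-handedness convention for $t_{C_i}$. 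The main obstacle I anticipate is precisely this bookkeeping of signs and of the edge-relabelling across the five (or however many) flips — the count ``$4$'' is sign-sensitive and it is easy to be off by a factor of $\pm 1$ or $\pm 2$ if the transversal-arc orientations or the flip identifications are mishandled; a useful consistency check is that the value must be $MC_\nabla(\Sigma;a)$-equivariantly natural and must reproduce, via Theorem~\ref{thm:m}, the known value $\pm 2$ of Earle's cocycle $\tilde f$ on the same BP torus map, which I would verify independently as a cross-check before concluding $\tilde m^a(\varphi)=4a$.
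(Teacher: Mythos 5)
Your overall strategy coincides with the paper's: represent $\varphi$ by the Morita--Penner sequence of fourteen flips in Figure \ref{fig:bp}, evaluate the cocycle $m^a$ on each flip using the induced $H$-markings, and sum. The problem is that for a lemma of this kind the proof \emph{is} that computation, and you never perform it: every step of your argument is conditional (``I would list\dots'', ``I would reuse\dots'', ``I expect\dots'', ``I would check\dots''). The paper's proof consists precisely of the fourteen evaluations, organized into four blocks: the five flips realizing the first Dehn twist contribute $(a-c)+(a-b)+0+(a+c)+(a+b)=4a$, the next pair of flips contributes $6a+2b-2c+4d$, the five flips of the second twist contribute $-4a$, and the final pair contributes $-2a-2b+2c-4d$, for a total of $4a$. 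In particular, your guiding heuristic --- that the second twist mirrors the first so that ``most contributions cancel in pairs, leaving a small residual'' --- captures only half of the structure: the two Dehn-twist blocks do cancel exactly, but the entire answer $4a$ comes from the two interpolating pairs of flips, which do \emph{not} cancel. So the residual cannot be located, let alone pinned down as $4a$ rather than some other even multiple, without actually doing the sign- and relabelling-bookkeeping that you explicitly defer.

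Two further points. First, your proposed cross-check --- that the answer must reproduce, via Theorem \ref{thm:m}, the known value of Earle's cocycle $\tilde f$ on the same BP map --- is circular in this context: Theorem \ref{thm:m} is \emph{deduced} from the present lemma together with Lemma \ref{lem:f(phi)}, so it cannot be invoked to certify the constant $4$. Even granting independently that $H^1(MC(\Sigma);H)$ is infinite cyclic generated by $k$ and that twisted coboundaries vanish on Torelli elements, this would only tell you that $\tilde m^a(\varphi)$ is an integer multiple of $\tilde f(\varphi)=-2B_2$; determining \emph{which} multiple still requires the flip-by-flip computation. Second, your identification of the class $a$ in the statement with $[C_1]=[C_2]$ is plausible up to sign (both are $\pm B_2$ in the coordinates of Figures \ref{fig:basis} and \ref{figtwo}), but it is itself something to be read off from the marked fatgraph of Figure \ref{fig:bp}, not assumed on the grounds of Poincar\'e duality.
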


\begin{proof}
In \cite{MP}, Morita and Penner expressed a torus BP map by the
sequence of 14 flips in Figure \ref{fig:bp}, and we shall compute the
contributions from each step:

\noindent (1) the first Dehn twist (5 flips):
$$(a-c)+(a-b)+0+(a+c)+(a+b)=4a.$$
(2) the first pair of flips (2 flips):
$$(2a+b+d+a+d-c)+(a+d+2a+b+d-c)=6a+2b-2c+4d.$$
(3) the second Dehn twist (5 flips):
$$(-a+a+b)+(-a-a+c)+0+(-a-a-b)+(-a-c+a)=-4a.$$
(4) the second pair of flips (2 flips):
$$(-d-a-b-d+c)+(-b-d-a-d+c)=-2a-2b+2c-4d.$$
Taking the sum, we find $\tilde{m}^a(\varphi)=4a$.
\end{proof}

For definiteness, let us choose the particular fatgraph spine of
$\Sigma$ illustrated in Figure \ref{figtwo}, which also depicts two
simple closed curves $C_1,C_2$ and   part of a homology marking
viz.\ Figure \ref{figtwo}.  If $\varphi=t_{C_1}t_{C_2}^{-1}$, then
$\tilde{m}^a(\varphi)=4B_2$ in the notation of the standard basis
illustrated in Figure \ref{fig:basis}.

\begin{figure}[!h]
\begin{center}
\epsffile{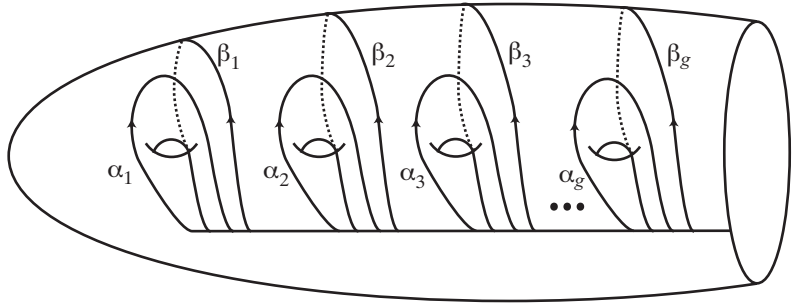}
\caption{Geometrically symplectic basis for $\pi$}
\label{fig:basis}
\end{center}
\end{figure}

\begin{figure}[!h]
\begin{center}
\epsffile{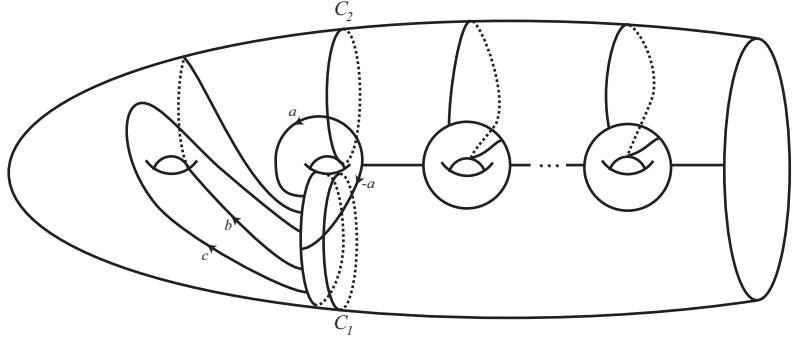}
\caption{Fatgraph spine, BP map $\varphi=t_{C_1}t_{C_2}^{-1}$ and
  a part of homological marking}
\label{figtwo}
\end{center}
\end{figure}

We next compute the value of $\tilde{f}$ on $\varphi=t_{C_1}t_{C_2}^{-1}$ in Figure \ref{figtwo}.
The action of $\varphi$ on $\pi$ is given by
$$\varphi(\alpha_1)=\gamma \alpha_1\gamma^{-1},\ \varphi(\beta_1)=\gamma \beta_1 \gamma^{-1},\
\varphi(\alpha_2)=\gamma \alpha_2 \beta_2,\ \varphi(\beta_2)=\beta_2,$$
and $\varphi(\alpha_i)=\alpha_i$, $\varphi(\beta_i)=\beta_i$ for
$i\ge 3$, where $\gamma
=\alpha_2\beta_2^{-1}\alpha_2^{-1}[\beta_1,\alpha_2]$.

\begin{lemma}
\label{lem:f(phi)} Let $\varphi=t_{C_1}t_{C_2}^{-1}$ be as above.
Then $\tilde{f}(\varphi)=-2B_2$.
\end{lemma}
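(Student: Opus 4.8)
The plan is to compute the homomorphism $h_\varphi\colon\pi\to\mathbb Z$, $y\mapsto d(\varphi^{-1}(y))-d(y)$, which under the intersection‑form identification $H\cong\operatorname{Hom}(H,\mathbb Z)$ is by definition $\tilde f(\varphi)$: that is, $\tilde f(\varphi)\cdot y=h_\varphi(y)$ for all $y\in\pi$ (read modulo $[\pi,\pi]$). Since $h_\varphi$ is a homomorphism it is determined by its values on the standard generators $\alpha_1,\beta_1,\dots,\alpha_g,\beta_g$, and $\tilde f(\varphi)$ is then recovered by inverting the (unimodular) intersection matrix. So the statement reduces to showing that $h_\varphi$ vanishes on every standard generator other than $\alpha_2$, and to evaluating $h_\varphi(\alpha_2)$.

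First I would record $\varphi^{-1}$ on generators. Since $\varphi(\beta_2)=\beta_2$ and $\varphi$ fixes the handles of index $\ge 3$, so does $\varphi^{-1}$; and a direct check shows $\varphi^{-1}(\alpha_1)=\gamma'\alpha_1\gamma'^{-1}$, $\varphi^{-1}(\beta_1)=\gamma'\beta_1\gamma'^{-1}$, $\varphi^{-1}(\alpha_2)=\gamma'\alpha_2\beta_2^{-1}$, $\varphi^{-1}(\beta_2)=\beta_2$, where $\gamma'=\varphi^{-1}(\gamma)^{-1}$ (verify by composing with $\varphi$). For each standard generator $y$, $\varphi^{-1}(y)$ is a word in $\alpha_1,\beta_1,\alpha_2,\beta_2$, so $d(\varphi^{-1}(y))=d(p_1(\varphi^{-1}(y)))+d(p_2(\varphi^{-1}(y)))$ and only $p_1(\gamma'),p_2(\gamma')$ enter. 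Because $\varphi$ lies in the Torelli group, $[\gamma']=-[\gamma]=B_2$; hence $p_1(\gamma')$ is a power of $\beta$ of exponent $0$, so trivial, while $p_2(\gamma')$ is pinned down by the self‑referential identity it inherits from $\gamma'=\varphi^{-1}(\gamma)^{-1}$ once one projects to $F(\alpha,\beta)$ (most of the word then cancels). Feeding $p_1(\varphi^{-1}(y)),p_2(\varphi^{-1}(y))$ into the defining formula for $d$ gives $h_\varphi$ on each generator.

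I would actually prefer a variant that never inverts $\varphi$. The function $d$ satisfies $d(ab)-d(a)-d(b)=\pm\,[a]\cdot[b]$ and $d(c^{-1})=-d(c)$ for $c\in[\pi,\pi]$, and it vanishes on $[[\pi,\pi],\pi]=\Gamma_2$, so it descends to a homomorphism $\Gamma_1/\Gamma_2\to\mathbb Z$; since $\varphi\in MC_1(\Sigma)$ acts trivially on $\Gamma_1/\Gamma_2$ (by the argument already used for Claim~\ref{cl2}), $d$ is $\varphi^{\pm1}$‑invariant on $\Gamma_1$. These facts combine to give $h_\varphi(y)=d(\varphi(y)y^{-1})$ up to a global sign, which reduces everything to the explicitly given twists $\varphi(\alpha_1)\alpha_1^{-1}=[\gamma,\alpha_1]$, $\varphi(\beta_1)\beta_1^{-1}=[\gamma,\beta_1]$, $\varphi(\alpha_2)\alpha_2^{-1}=\gamma\alpha_2\beta_2\alpha_2^{-1}$, $\varphi(\beta_2)\beta_2^{-1}=1$; their $d$‑values need only $p_1(\gamma),p_2(\gamma)$ and a single pass through the formula for $d$, and one finds them to vanish for $y\ne\alpha_2$ while $h_\varphi(\alpha_2)$ gets the value dual to $-2B_2$.

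The main obstacle is the bookkeeping around $\varphi^{-1}$, equivalently around $\gamma'=\varphi^{-1}(\gamma)^{-1}$: naive substitution into its defining equation makes word lengths grow without bound, so one must either project to $F(\alpha,\beta)$ before unwinding the recursion, or use the $MC_1(\Sigma)$‑trick of the previous paragraph to sidestep $\varphi^{-1}$ altogether. Once that is handled, evaluating the formula for $d$ on the handful of short words involved, and matching against the intersection‑pairing normalization to fix the sign $-2$ rather than $+2$, is routine.
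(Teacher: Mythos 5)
Your second variant is essentially the paper's proof: the paper likewise sidesteps $\varphi^{-1}$ by noting $\tilde f(\varphi)=-\tilde f(\varphi^{-1})$ (immediate from the cocycle condition since $\varphi$ lies in the Torelli group and so acts trivially on $H$), and then reduces to evaluating $d(\varphi(y))$ on the standard generators via the projections $p_1,p_2$, exactly as you describe. The only difference is cosmetic --- you justify the sign flip through structural properties of $d$ on $[\pi,\pi]$ rather than the cocycle identity, and you should nail down that the global sign is $h_\varphi(y)=-d(\varphi(y)y^{-1})$ rather than defer it.
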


\begin{proof}
The equality $\tilde{f}(\varphi)   =-\tilde{f}(\varphi^{-1}) =-2B_2$
is equivalent to $d(\varphi(x))-d(x)=0$ if $x\in \{ \alpha_i
\}_{i\neq 2}\cup \{ \beta_i \}_i$, and
$d(\varphi(\alpha_2))-d(\alpha_2)=-2$. Since
$d(\alpha_i)=d(\beta_i)=0$ for any $1\le i\le g$, it is sufficient
to show that $d(\varphi(\alpha_i))=0$ if $i\neq 2$,
$d(\varphi(\alpha_2))=-2$, and $d(\varphi(\beta_i))=0$ for $1\le
i\le g$. These equalities are verified by direct computations. For
example, to see $d(\varphi(\alpha_2))=-2$, note that
$\varphi(\alpha_2)=\alpha_2\beta_2^{-1}\alpha_2^{-1}[\beta_1,\alpha_1]\alpha_2\beta_2$
so $p_1(\varphi(\alpha_2))=\beta \alpha \beta^{-1} \alpha^{-1}
\stackrel{d}{\mapsto} -2$ and $p_2(\varphi(\alpha_2))=\alpha
\stackrel{d}{\mapsto} 0$.
\end{proof}

Theorem \ref{thm:m} follows directly from Lemmas \ref{lem:m(phi)} and \ref{lem:f(phi)},.

\begin{question}
Can one define a counterpart of the Earle class for a general
abelian $K$-marking ? What is the   group $H^1(MC_{\Box}(\Sigma);K)$
?
\end{question}

\bibliographystyle{amsplain}

\end{document}